\theoremstyle{plain}\newtheorem{definition}{Definition}[section]
\theoremstyle{definition}\newtheorem{theorem}{Theorem}[section]
\theoremstyle{plain}\newtheorem{lemma}[theorem]{Lemma}
\theoremstyle{plain}\newtheorem{coro}[theorem]{Corollary}
\theoremstyle{plain}
\theoremstyle{remark}\newtheorem{remark}{Remark}[section]
\newcommand{\Div}{\mathrm{div}\,}
\newcommand{\B}{\Big}
\newcommand{\be}{\begin{equation}}
\newcommand{\ee}{\end{equation}}
 \newcommand{\ba}{\begin{aligned}}
 \newcommand{\ea}{\end{aligned}}
  \newcommand{\f}{\frac}
  \newcommand{\ben}{\begin{enumerate}}
   \newcommand{\een}{\end{enumerate}}
\newcommand{\Rmnum}[1]{\expandafter\@slowromancap\romannumeral #1@}
\numberwithin{equation}{section}
\begin{document}
\title{A general sufficient criterion for energy conservation in the   Navier-Stokes system }
\author{ Yanqing Wang\footnote{ College of Mathematics and Information Science, Zhengzhou University of Light Industry, Zhengzhou, Henan  450002,  P. R. China Email: wangyanqing20056@gmail.com}
\;~ and\, \,
Yulin Ye\footnote{Corresponding author. School of Mathematics and Statistics,
Henan University,
Kaifeng, 475004,
P. R. China. Email: ylye@vip.henu.edu.cn   }}
\date{}
\maketitle
\begin{abstract}
In this paper, we derive an energy conservation criterion based on  a combination of velocity and its gradient for the weak solutions of both the homogeneous
incompressible Navier-Stokes equations and the
 general compressible Navier-Stokes equations.
For the incompressible case,  this  class implies   most   known  corresponding results on periodic domain via either  the velocity  or its gradient   including  the famous Lions'  energy  conservation criterion obtained in \cite{[Lions]}.
  For the  compressible case,  this   helps us to extend the previously known criteria for the energy conservation  of weak solutions from the incompressible  fluid  to   compressible flow and improve the recent results due to   Nguyen-Nguyen-Tang in \cite[Nonlinearity 32 (2019)]{[NNT]} and Liang in \cite[Proc. Roy. Soc. Edinburgh Sect. A (2020)]{[Liang]}.
     \end{abstract}
\noindent {\bf MSC(2000):}\quad 35Q30, 35Q35, 76N06, 76N10\\\noindent
{\bf Keywords:}  Navier-Stokes equations;  Compressible Navier-Stokes equations; Energy conservation \\
\section{Introduction}
\label{intro}
\setcounter{section}{1}\setcounter{equation}{0}
The homogeneous    Navier-Stokes equations describing the motion of   incompressible fluid in
three-dimensional space
 read
  \be\left\{\ba\label{NS}
  &  v _{t}-\Delta v +\Div(  v\otimes v)+\nabla
\pi=0,\quad(x,t)\in\Omega\times(0,+\infty),\\
&\Div v=0,\\
&v_{0}=v(x,0).
\ea\right.\ee
 Here, $v$ stands for the velocity field of the flow and
 $\pi$ represents the  pressure of the fluid, respectively. The initial datum satisfies $\Div v_0=0$.
Usually, one considers the Navier-Stokes equations on the periodic domain  $(\Omega=\mathbb{T}^3)$, on smooth bounded domain $\Omega$ with Dirichlet  boundary condition or on the whole space
 $(\Omega=\mathbb{R}^3)$.

 It is well known that Leray-Hopf weak solutions of the Navier-Stokes equations \eqref{NS} obeys the energy inequality
 $$
 \|v(T)\|_{L^{2}(\Omega)}^{2}+2 \int_{0}^{T}\|\nabla v\|_{L^{2}(\Omega)}^{2}ds\leq \|v_0\|_{L^{2}(\Omega)}^{2},
 $$
 rather than energy equality.
 The first attempts to determine sufficient conditions implying energy conservation of Leray-Hopf weak solutions in the homogeneous incompressible Navier-Stokes equations were given by  Lions \cite{[Lions]} and Prodi \cite{[Prodi]} provided that the criterion  \begin{equation}\label{lions}v\in L^4(0,T;L^4(\Omega))\end{equation} is satisfied. Then Serrin \cite{[Serrin1963]} showed the energy conservation by giving a criterion in a scaling invarant space, that is
$$v\in L^p(0,T; L^q(\Omega)),\ \text{with}\  \frac{2}{p}+\frac{d}{q}\leq 1 \ \text{and}\  q\geq d,$$
where $d$ is the spatial dimension. However, the weak solution which satisfy the given criterion will immediately become a classical one.
Later, Shinbrot extended Lions' condition in \cite{[Lions]}
for energy conservation to
\be\label{Shinbrot}
v\in L^{p}(0,T;L^{q}(\Omega)) ~\text{ with} ~\f{2}{p}+
\f{2}{q}=1, q\geq 4.
\ee
It is worth remarking that condition \eqref{Shinbrot} is weaker than \eqref{lions} when the dimension $d\geq 4$ and more importantly, it is true regardless of the dimension of the underlying space. On the other hand, the energy conservation condition \eqref{Shinbrot} can be replaced by
\be\label{TBV}
v\in L^{p}(0,T;L^{q}(\Omega)) \text{ with}\  \f{1}{p}+
\f{3}{q}=1, 3<q< 4;
\ee
which   was recently obtained by   Beirao da Veiga-Yang in \cite{[BY]}. Using the  Fourier methods, Cheskidov-Friedlander-Shvydkoy \cite{[CFS2010]}
gave the following sufficient condition for energy conservation (here $A$ denotes the Stokes operator associated to the Dirichlet boundary conditions) that
$$ A^{5/12}v\in L^3(0,T; L^2(\Omega)),$$
in fact, this criterion is equivalent in terms of scaling to $v\in L^3(0,T; L^\frac{9}{2}(\Omega)).$
Very recently, Berselli-Chiodaroli  \cite{[BC]} and Zhang \cite{[Zhang]} obtained energy equality  via the following condition,
\be\label{bcz}
\nabla v\in L^{p}\left(0, T ; L^{q}\left(\Omega\right)\right),
\frac{1}{p}+\frac{3 }{q}=2,   \frac{3 }{2}<q<\frac{9}{5}~~\text{ or}~~
\frac{1}{p}+\frac{6}{5 q}=1,   \frac{9}{5} \leq q.\ee
It is worth remarking that the domain $\Omega$ in  most the    aforementioned  conditional results  is the smooth bounded. The first objective of this paper is to show  the following sufficient criterion for the weak solutions keeping the energy  of the Navier-Stokes equations
on the  periodic domain $\mathbb{T}^{3},$
\be\label{wy}
  v\in L^{\f{2p}{p-1}} (0,T;L^{\f{2q}{q-1}}(\mathbb{T}^{3}) ) ~~  \text{and}~~ \nabla v \in L^{p} (0,T; L^{q}(\mathbb{T}^{3}) ).
 \ee
Surprisingly, this result covers the corresponding  results of \eqref{Shinbrot}, \eqref{TBV} and \eqref{bcz}  on periodic domain and further discussion will be found in Remark \ref{rem1.2} and \ref{rem1.3}. Indeed, we will also prove this class for the following  compressible Navier-Stokes equations with
degenerate viscosities and general pressure law (GNS)
\be\left\{\ba\label{GNS}
&\rho_{t} + \Div (\rho v)   =0,   \\
&(\rho v)_{t} +\Div(\rho v\otimes v)+\nabla p (\rho)- \Div(\nu(\rho)\mathbb{D}v)-\nabla(\mu(\rho)\Div v)= 0,\\
\ea\right.\ee
with the initial data
\begin{equation}\label{gns1}
	\begin{aligned}
		\rho(0,x)=\rho_0(x)\ \text{and} \ (\rho v)(0,x)=\rho_0(x)v_0(x),\ \ x\in \Omega,
	\end{aligned}
\end{equation}
where the unknown functions $\rho$ and $v$ denote the density of the fluid and velocity of the fluid, respectively; $\mathbb{D}v=\frac{1}{2}(\nabla v\otimes \nabla v^{T} )$ stands for the  stain tensor; The general pressure $0\leq p(\rho)\in C^{1}(0,\infty)$ with $p^{'}(\cdot)>0$ and the viscosity coefficients $\nu(\rho),\mu(\rho):(0,\infty)\rightarrow [0,\infty)$ are continuous functions of density. We will
consider the case of bounded domain with periodic boundary conditions, namely $\Omega=\mathbb{T}^d$, where $d\geq 2$ is the dimension of the domain.
It should be noted that when $\nu(\rho)\equiv\nu, \mu(\rho)\equiv \mu$ and  $p(\rho)=\rho^\gamma $ with $\gamma >1$, then (GNS) will reduce to the classical  isentropic compressible Navier-Stokes equations (ICNS):
\be\left\{\ba\label{ICNS}
&\rho_t+\Div (\rho v)=0, \\
&(\rho v)_{t} +\Div(\rho v\otimes v)+\nabla
\rho^\gamma -\nu\Delta v- \mu \nabla\text{div\,}v=0,
\ea\right.\ee
 and when $\nu(\rho)=\rho , \mu(\rho)\equiv 0$ and $p(\rho)=\rho^\gamma $ with $\gamma >1$, then (GNS) reduces to the compressible Navier-Stokes equations with degenerate viscosity and $\gamma $-pressure law (CNSD) as follows:
\be\left\{\ba\label{CNSD}
&\rho_{t} + \Div (\rho v)   =0,   \\
&(\rho v)_{t} +\Div(\rho v\otimes v) +\nabla \rho^\gamma- \Div(\rho \mathbb{D}v) = 0.
\ea\right.\ee
The global existence of weak solutions which satisfy the energy inequality has already been known, see P. L. Lions \cite{Lions} and Feireisl-Novotn\'y-Petzeltov\'a \cite{FNP2001} for (ICNS) with constant viscosity coefficients case, Vassure-Yu \cite{VY2016} for (CNSD) and  Li-Xin \cite{LX2015} for (GNS) with degenerate viscosity coefficients case, but the regularity and even the uniqueness of  weak solutions are still open problems. Since the weak solutions satisfy the energy inequality rather than  equality due to the basic a priori estimates and the lack of regularity, this anomalous dissipation of the energy opens a possibility for an energy sink other than the natural viscous
dissipation, however, such a property of real fluids is not expected to exist
physically, so it is therefore a famous problem to consider the sufficient criterion for the energy conservation of  weak solutions. Roughly speaking, this addresses the question how much regularities are needed for a weak solution to conserve energy, which is also involving the uniqueness of the weak solutions. On the other hand, energy conservation is also one aspect of the Onsager's conjecture  in the context of homogeneous incompressible Euler equations in \cite{[Onsager]}, in which Onsager conjectured that the kinetic energy is globally conserved for H\"older continuous solutions with the exponent greater than $\frac{1}{3}$, while
an energy dissipation phenomenon occurs for H\"older continuous solutions with the exponent less than $\frac{1}{3}$. For the positive part,  the  milestone  work is due to Constantin-E-Titi
\cite{[CWT]}, in which it was proved that the energy of 3D incompressible
Euler equations is conserved for every weak solution in $L^{3}(0,T;
B_{3,\infty}^{\alpha})$ with $\alpha>1/3$. On the other hand, Isett resolved
the \textquotedblleft negative\textquotedblright part of Onsager's conjecture
for 3D incompressible Euler equations in \cite{[Isett2016]}, where he proved
that for any $\alpha<\frac{1}{3}$ there is a nonzero weak solution to the
incompressible Euler equations in the class $v\in C^\alpha_{t,x}$ and $p \in
C^{2\alpha}_{t,x}$ such that $v$ is identically $0$  outside a finite time
interval. In particular, the solution $v$ fails to conserve the energy. We
refer the reader to \cite{[ABCDJK2021],[ADSW],[BT], [BV2019],
	[CY],[CCFS],[EGSW],[NNT1],[NNT2],[BDSV]} for
recent
progress in
this direction.

  Compared with the incompressible Navier-Stokes equations, due to the stronger nonlinearity, the energy conservation of the weak solutions is more challenging and hence the results are few. When the density is strictly away from vacuum,  for the weak solutions to the general  compressible models \eqref{GNS},
Nguye-Nguye-Tang \cite{[NNT]} established the   Shinbrot-type criterion and showed that if the weak solutions satisfied
\be\ba\label{NNT}
&0<  c_{1}\leq\rho\leq c_{2}<\infty,  v\in L^{\infty}(0,T; L^{2}
(\mathbb{T}^{3})), \nabla v\in L^{2}(0,T; L^{2}(\mathbb{T}^{3})),   \\&\sup_{t\in(0,T)}\sup_{|h|<\varepsilon}|h|^{-\f12}
\|\rho(\cdot+h,t)-\rho(\cdot,t)\|_{L^{2}(\mathbb{T}^{3})}<\infty,\\
&v\in L^{p}(0,T;L^{q}(\mathbb{T}^{3})) \text{ with}~~ \left\{ \ba &\f{2}{p}+
\f{2}{q}=1, q\geq 4,\\&\f{1}{p}+
\f{3}{q}=1, 3<q< 4,\ea\right.
\ea\ee
then the energy of  weak solutions is globally conserved, which means the energy equality holds for any $t\in [0,T]$.
It is worth noting that though the part $\f{1}{p}+
\f{3}{q}=1, 3<q< 4$  was not mentioned in \cite{[NNT]}, it is   a direct consequence from  interpolation and $v\in L^{4}(0,T;L^{4}(\mathbb{T}^{3}))$ (see \cite{[BY]} and the corresponding proof in Theorem \ref{the1.1}).

Later, Liang \cite{[Liang]}  derived a energy conservation criterion via the gradient of velocity for   isentropic Navier-Stokes equations (ICNS) under the following condition
\be \label{lns} \ba
&0<  c_{1}\leq\rho\leq c_{2}<\infty, v\in L^{\infty}(0,T; L^{2}(\mathbb{T}^{3})),       \nabla v\in L^{2}(0,T; L^{2}(\mathbb{T}^{3})), \\
&\nabla v \in L^{p}\left(0, T ; L^{s}(\mathbb{T}^{3})\right) ~\text{with}~ \left\{ \ba
\frac{1}{p}+\frac{3}{s}<2, \ \ \ \ \ \ & \frac{3}{2}<s<\frac{9}{5}, \\
\frac{5}{p}+\frac{6}{s}<5,\ \ \ \  \ \  & \frac{9}{5} \leq s\leq3, \\
\frac{1}{p}+\frac{2}{s+2}<1, \ \ & 3<s<\infty ,
\ea\right.\ea\ee
then the energy of weak solutions is locally conserved, which means the energy equality holds in the sense of distribution in $(0,T)$.

When the density may contain vacuum,   in the spirit of well-known Shinbrot's criterion in \cite{[Shinbrot]},
  Yu \cite{[Yu2]} showed that if a weak solution  $(\rho, v)$ of \eqref{CNSD}  or \eqref{ICNS} satisfies
\be\ba\label{yu}
\sqrt{\rho}v\in L^{\infty}(0,T;L^{2}(\Omega)),\sqrt{\rho}\nabla v\in L^{2}(0,T;L^{2}(\Omega)),\\
0\leq\rho\leq c<\infty, \ \ \nabla\sqrt{\rho} \in L^{\infty}(0,T;L^{2}(\Omega))\\
v\in L^{p}(0,T;L^{q}(\Omega)) ~~\f1p+\f1q\leq \f5{12}   \ \text{ and}\   q\geq6,
\ea
\ee
then the energy is globally conserved. Recently, for equations (\ref{ICNS}), Chen-Liang-Wang-Xu \cite{[CLWX]} obtained the energy balance   in a bounded domain with physical boundaries under the following condition
\be\label{clwx}\ba
& \sqrt{\rho}v\in L^{\infty}(0,T;L^{2}(\Omega)),\sqrt{\rho}\nabla v\in L^{2}(0,T;L^{2}(\Omega)),\\
& 0\leq \rho\leq c<\infty, \nabla\sqrt{\rho} \in L^{\infty}(0,T;L^{2}(\Omega)),\\
& v\in L^{p}(0,T;L^{q}(\Omega)),p \geq4, q\geq6.
\ea\ee

As mentioned above, the energy conservation for fluid equations addresses the question how much regularities are needed for a weak solution to conserve energy, which will help us further to consider the uniqueness and regularity of the weak solutions. However, up to now, the related results on energy conservation of weak solutions for compressible Navier-Stokes equations are less satisfactory than incompressible ones.  For example, compared with the Shinbrot's condition \eqref{Shinbrot} for the incompressible Navier-Stokes equations, the criterion \eqref{NNT} obtained by Nguye-Nguye-Tang in \cite{[NNT]} requires additional constraint that $\sup\limits_{t\in(0,T)}\sup\limits_{|h|<\varepsilon}|h|^{-\f12}
\|\rho(\cdot+h,t)-\rho(\cdot,t)\|_{L^{2}(\mathbb{T}^{3})}<\infty$. Note that,   due to the blow-up criteria only via  the density for strong solutions to the Cauchy problem of compressible isentropic Navier-Stokes equations in $\mathbb{R}^{3}$, under the assumptions on the coefficients of viscosity established in \cite{[SWZ],[WZ]} it implies the bound of
density to the 3-D compressible Navier-Stokes
equations yields strong solutions and the strong solutions are expected to meet
 energy conservation.
 Based on this, the second objective of this paper is  to remove  $\eqref{NNT}_{2}$ to obtain the persistence of energy. Moreover, when we see the criterion obtained by  Liang in  \cite{[Liang]}, on the one hand, the restrictions on the indexes $p$ and $s$ are \textquotedblleft subcritical\textquotedblright other than \textquotedblleft critical\textquotedblright and are stronger when $s>3$ compared with the result \eqref{bcz} for incompressible case. On the other hand, the criterion obtained in \cite{[Liang]} only implies the energy conserved \textquotedblleft locally \textquotedblright not \textquotedblleft globally\textquotedblright. Hence, our third objective of this paper is to improve the criterion via the gradient of velocity and to show $\eqref{NNT}_{1}$ and  \eqref{bcz} guarantee the energy equality in system \eqref{GNS} globally.

Before stating the main results, we introduce the definition of the weak solutions.
 \begin{definition}
 	A pair ($\rho,v$) is called a weak solution to \eqref{GNS} with initial data ($\rho_{0},v_{0}$) if ($\rho,v$) satisfy
 	\begin{enumerate}[(i)]
 		\item equations \eqref{GNS} hold in $\mathcal{D}'(0,T;\mathbb{T}^d)$ and
 		\begin{equation}\label{1.13}
 			P(\rho ), \rho |v|^2\in L^\infty(0,T;L^1(\mathbb{T}^d)),\ \ \ \nabla v\in L^2(0,T;L^2(\mathbb{T}^d)),
 		\end{equation}
 		\item[(ii)] $\rho(\cdot,t)\rightharpoonup \rho_0$ in $\mathcal{D}'(\Omega)$ as $t\rightarrow 0$, i.e.
 		\begin{equation}\label{1.14}
 		\lim_{t\rightarrow 0}\int_{\mathbb{T}^d}\rho (x,t)\varphi(x)dx=\int_{\mathbb{T}^d}\rho_0(x)\varphi(x)dx,
 		\end{equation}
 		for every test function $\varphi\in C_0^\infty(\mathbb{T}^d).$
 		\item[(iii)]
 		$(\rho v)(\cdot,t)\rightharpoonup \rho_0 v_0$ in $\mathcal{D}'(\mathbb{T}^d)$ as $t\rightarrow 0$ i.e.
 		\begin{equation}\label{1.15}
 		\lim_{t\rightarrow 0}\int_{\mathbb{T}^d}(\rho v)(x,t)\psi(x) dx=\int_{\mathbb{T}^d} (\rho_0 v_0)(x)\psi(x) dx,
 		\end{equation}
 		for every test vector field $\psi\in C_0^\infty (\mathbb{T}^d)^d$.
 		\item[(iv)]
 		the energy inequality holds
 		\begin{equation}\label{1.16}
 		\begin{aligned}
 		\mathcal{E}(t) +\int_0^T\int_{\mathbb{T}^d}&\left[\nu(\rho)|\mathbb{D} v|^2+\mu(\rho)|\Div v|^2 \right]dxdt\leq \mathcal{E}(0),
 		\end{aligned}\end{equation}
 		where $\mathcal{E}(t)=\int_{\mathbb{T}^d}\left[\frac{1}{2}\rho |v|^2+P(\rho) \right] dx$ and $P(\rho)=\rho \int_{1}^{\rho} \frac{p(z)}{z^2}dz.$
 	\end{enumerate}
 \end{definition}
We formulate our first result as follows:
\begin{theorem}\label{the1.2}
 For any dimension $d\geq2$, let $(\rho, v)$  be a weak solution  to the general compressible Navier-Stokes equations \eqref{GNS}. Assume that $1<p,q<\infty$ and
 \be\left\{\ba\label{u}
&0<  c_{1}\leq\rho\leq c_{2}<\infty,   v\in L^{\infty}(0,T; L^{2}(\mathbb{T}^{d})),   \nabla v\in L^{2}(0,T; L^{2}(\mathbb{T}^{d})),   \\
& v\in L^{\f{2p}{p-1}}(0,T; L^{\f{2q}{q-1}}(\mathbb{T}^{d})), \ \nabla v\in L^{p}(0,T; L^{q}(\mathbb{T}^{d})) \ and\ \sqrt{\rho_0}v_0\in L^{2+\delta}\ for \ any\ \delta>0,
\ea\right.\ee
then the energy of weak solutions is globally conserved, that is, for any $t\in [0,T]$
\begin{equation}\label{EI}
	\begin{aligned}
		\mathcal{E}(t) +\int_0^T\int_{\mathbb{T}^d}&\left[\nu(\rho)|\mathbb{D} v|^2+\mu(\rho)|\Div v|^2 \right]dxdt= \mathcal{E}(0),
\end{aligned}\end{equation}
where $\mathcal{E}(t)=\int_{\Omega}\left[\frac{1}{2}\rho |v|^2+P(\rho) \right] dx$ and $P(\rho)=\rho \int_{1}^{\rho} \frac{p(z)}{z^2}dz.$
\end{theorem}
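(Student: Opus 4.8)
The plan is to mollify the equations in space, test the momentum equation against the mollified velocity, and then pass to the limit controlling the commutator errors by the hypotheses in \eqref{u}. Concretely, let $v^\varepsilon = v \ast \eta_\varepsilon$ denote the spatial mollification with a standard kernel $\eta_\varepsilon$ on $\mathbb{T}^d$. Mollifying the continuity equation and the momentum equation, multiplying the mollified momentum equation by $v^\varepsilon$, integrating over $\mathbb{T}^d$, and using the mollified continuity equation to rewrite the time-derivative terms in the by-now standard way (as in Feireisl--Novotn\'y--Petzeltov\'a and in \cite{[NNT]}), one arrives after integration in time at an identity of the schematic form
\begin{equation}
\mathcal{E}^\varepsilon(t) + \int_0^t\int_{\mathbb{T}^d}\left[\nu(\rho)\mathbb{D}v:\mathbb{D}v^\varepsilon + \mu(\rho)\Div v\,\Div v^\varepsilon\right]dxds = \mathcal{E}^\varepsilon(0) + R_1^\varepsilon + R_2^\varepsilon + R_3^\varepsilon,
\end{equation}
where $\mathcal{E}^\varepsilon$ is the natural regularized energy, and $R_1^\varepsilon$ comes from the convective term $\Div(\rho v\otimes v)$, $R_2^\varepsilon$ from the pressure term $\nabla p(\rho)$, and $R_3^\varepsilon$ from the commutator between mollification and the time derivative in $(\rho v)_t$. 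The viscous terms converge to the dissipation in \eqref{EI} since $\nabla v\in L^2_{t,x}$ and the coefficients are bounded (using $c_1\le\rho\le c_2$ and continuity of $\nu,\mu$). Because $c_1 \le \rho \le c_2$, $\mathcal{E}^\varepsilon(t)\to\mathcal{E}(t)$ for a.e.\ $t$ and at the endpoints one uses \eqref{1.14}--\eqref{1.15} together with the extra integrability $\sqrt{\rho_0}v_0\in L^{2+\delta}$ to upgrade weak convergence of $\mathcal{E}^\varepsilon(0)$ and to obtain the equality \emph{for every} $t\in[0,T]$ rather than almost every $t$; this is exactly the role of that hypothesis, mirroring how $v\in L^\infty_t L^2$ is used.

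The heart of the matter is showing $R_1^\varepsilon\to 0$. Writing the convective contribution in commutator form, one needs to bound quantities like
\begin{equation}
\int_0^t\int_{\mathbb{T}^d}\big(\rho v\otimes v - (\rho v\otimes v)^\varepsilon\big):\nabla v^\varepsilon\, dx\,ds \quad\text{and}\quad \int_0^t\int_{\mathbb{T}^d}\big((\rho v)^\varepsilon - \rho v\big)\cdot\big((v\cdot\nabla)v\big)^\varepsilon\,dx\,ds,
\end{equation}
and similar terms, by the product rule for the translation/mollification error. The key technical estimate is the Constantin--E--Titi type bound: a difference quotient of $v$ is controlled in $L^q$ by $|h|\,\|\nabla v\|_{L^q}$, so mollification errors of $v$ gain a factor $\varepsilon$ when measured against $\nabla v\in L^q$, at the cost of spending the complementary Lebesgue exponent $\frac{2q}{q-1}$ (twice, once for each remaining copy of $v$) and the time exponents $p$ and $\frac{2p}{p-1}$; since $\frac{1}{p}+\frac{p-1}{p}=1$ and $\frac{q-1}{2q}+\frac{q-1}{2q}+\frac1q=1$, Hölder closes exactly, and $\rho\in L^\infty$ absorbs the density factor. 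This is precisely why the scaling relation $\nabla v\in L^p_tL^q_x$ paired with $v\in L^{2p/(p-1)}_tL^{2q/(q-1)}_x$ appears in \eqref{u}: it is the minimal pairing that makes a single spatial derivative's worth of mollification error, distributed over the cubic nonlinearity, integrable. For $R_2^\varepsilon$, the pressure commutator, one uses that $P(\rho)$ and $p(\rho)$ are bounded (again by $c_1\le\rho\le c_2$ and the $C^1$ hypothesis on $p$) together with $\Div v\in L^2_{t,x}$ and strong $L^2$ convergence of mollifications, so $R_2^\varepsilon\to 0$ without any extra regularity on $\rho$ — this is what removes the condition $\eqref{NNT}_2$ on the density. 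The term $R_3^\varepsilon$ is handled by the same mollification-commutes-with-convection lemma applied to the continuity equation, using $\rho\in L^\infty$, $\rho v\in L^\infty_tL^2_x$ and $\nabla v\in L^2_{t,x}$.

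The main obstacle I anticipate is bookkeeping the convective term carefully enough to see that no stray derivative lands on a factor that is only in $L^2$: one must symmetrize and integrate by parts so that the single gained derivative always pairs against $\nabla v\in L^p_tL^q_x$ and the two "bad" copies of $v$ sit in the conjugate space $L^{2p/(p-1)}_tL^{2q/(q-1)}_x$, never asking more of $v$ than \eqref{u} provides. A secondary point is the endpoint-in-time issue: establishing the energy equality at $t=0$ and $t=T$ (not merely a.e.) requires the continuity-in-time properties encoded in \eqref{1.14}--\eqref{1.15} plus the slightly-better-than-$L^2$ bound $\sqrt{\rho_0}v_0\in L^{2+\delta}$, which lets one pass to the limit in $\mathcal{E}^\varepsilon(0)$ strongly. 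Once the three remainders are shown to vanish, letting $\varepsilon\to0$ yields \eqref{EI} for all $t\in[0,T]$.
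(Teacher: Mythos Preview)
Your overall scheme and your H\"older bookkeeping for the convective term are on the right track --- the pairing $\nabla v\in L^p_tL^q_x$ with two copies of $v\in L^{2p/(p-1)}_tL^{2q/(q-1)}_x$ is exactly how the paper closes that estimate. But there is a genuine gap in the test-function step. You propose to multiply the mollified momentum equation by $v^\varepsilon$ and say the time derivative is then rewritten ``in the by-now standard way (as in \cite{[NNT]})''. In fact \cite{[NNT]}, and the present paper, do \emph{not} test against $v^\varepsilon$: the test function is $\dfrac{(\rho v)^\varepsilon}{\rho^\varepsilon}$, chosen precisely so that
\[
\frac{(\rho v)^\varepsilon}{\rho^\varepsilon}\cdot\partial_t(\rho v)^\varepsilon
=\frac12\,\partial_t\frac{|(\rho v)^\varepsilon|^2}{\rho^\varepsilon}
+\frac12\,\partial_t\rho^\varepsilon\,\frac{|(\rho v)^\varepsilon|^2}{(\rho^\varepsilon)^2},
\]
after which $\partial_t\rho^\varepsilon$ is eliminated via the mollified continuity equation. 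With your test function, writing $\partial_t(\rho v)^\varepsilon\cdot v^\varepsilon$ as a total time derivative plus remainder forces a term containing $\partial_t v^\varepsilon$, for which there is no equation and no uniform bound (spatial mollification does not help with time regularity). Your $R_3^\varepsilon$ would carry exactly this term, and the continuity-equation commutator lemma you invoke does not control it.

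The second point is that you never invoke the paper's key new ingredient, Lemma~\ref{lem2.3}: for $c_1\le\rho\le c_2$ and $v\in W^{1,p}$ one has $\big\|\nabla\big(\tfrac{(\rho v)^\varepsilon}{\rho^\varepsilon}\big)\big\|_{L^p}\le C\|\nabla v\|_{L^p}$, uniformly in $\varepsilon$. Once one uses the correct test function, this estimate is what makes the pressure remainder
\[
\int\Div\Big(\frac{(\rho v)^\varepsilon}{\rho^\varepsilon}\Big)\,\big[(p(\rho))^\varepsilon-p(\rho^\varepsilon)\big]
\]
vanish using only $c_1\le\rho\le c_2$, $p\in C^1$, and $\nabla v\in L^2_{t,x}$ --- this, not a generic ``strong $L^2$ convergence of mollifications'', is how the extra density hypothesis $\eqref{NNT}_2$ is removed. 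The same lemma (together with Lemma~\ref{lem2.1}) is also what lets the paper place $\nabla\big(\tfrac{(\rho v)^\varepsilon}{\rho^\varepsilon}\big)$ in $L^{2p/(p-1)}_tL^{2q/(q-1)}_x$ for the convective commutators. Your treatment of the initial-time issue via \eqref{1.14}--\eqref{1.15} and $\sqrt{\rho_0}v_0\in L^{2+\delta}$ matches the paper's.
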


\begin{remark}
We follow the path of \cite{[NNT]} to prove Theorem \ref{the1.2}.
The improvement of their condition \eqref{NNT}  are threefold. First,
Theorem \ref{the1.2} removed the additional restriction on the regularity of density. Second,  Theorem \ref{the1.2} not only covers their result \eqref{NNT} but also allow us to derive new criterion (  see the following corollary). Third, the regularity of pressure
$p(\rho)  $ is relaxed from $ C^{2}(0,\infty)$ in \cite{[NNT]} to $ C^{1}(0,\infty)$.
\end{remark}
\begin{remark}\label{rem1.2}
At first glance, energy conservation criteria \eqref{u} based on a combination of velocity and its gradient are more complicated than \eqref{vei} and \eqref{tivei}, however, \eqref{u} together with natural  energy    $v\in L^{\infty}(0,T; L^{2}(\mathbb{T}^{3})),   \nabla v\in L^{2}(0,T; L^{2}(\mathbb{T}^{3}))$ leads to \eqref{vei} and \eqref{tivei} in the following corollary.
\end{remark}
\begin{remark}\label{rem1.3}
By small modification of proof in Theorem \ref{the1.2}, the results in Theorem \ref{the1.2} also hold for homogenous incompressible Navier-Stokes equations \eqref{NS}, that is, $v\in L^{\f{2p}{p-1}}(0,T; L^{\f{2q}{q-1}}(\mathbb{T}^{3})) $ and $\nabla v\in L^{p}(0,T; L^{q}(\mathbb{T}^{3})) $ means the energy equality in the classical homogenous incompressible Navier-Stokes equations.
The special case  $p=q=2$ reduces to the famous Lions' energy  conservation criterion  \eqref{lions}.
As mentioned in latter remark, this result covers the \eqref{Shinbrot}-\eqref{bcz}, hence, roughly speaking,  this      unifies  the known energy  conservation criteria  via the velocity  and its gradient in incompressible Navier-Stokes equations. After we finished this paper, we learnt that  a special case that $p=3,q=9/5$ and away from $1/2$-H\"older continuous curve in time for general energy equality in the  homogeneous Navier-Stokes equations \eqref{NS}  in $\mathbb{R}^{3}$ was considered in \cite{[Shvydkoy]}.
\end{remark}
\begin{remark}
  The new
ingredient in the proof of this theorem is the application of the following inequality
\be\label{keyin}
\B\|\nabla\B(\f{(\rho v)^{\varepsilon}}{\rho^{\varepsilon}} \B)\B\|_{L^{p}(\mathbb{T}^{d})}
\leq C\|\nabla v\|_{L^{p}(\mathbb{T}^{d})}.
\ee
  This help us to pass the limit of pressure term only with the
positive bounded density, which removes the additional restriction of the density $\eqref{NNT}_2$ in \cite{[NNT]}. For the proof of \eqref{keyin}, we refer the readers to Lemma \ref{lem2.3} (see also \cite[page 7]{[Liang]}).
 \end{remark}
 \begin{remark}
  One can consider  Theorem \ref{the1.2} and Corollary \ref{the1.1} on smooth bounded domain.
Combining the framework for bounded domain in \cite{[NNT]} and the proof here, one only needs to deal with the boundary terms caused by integrations by parts. Fortunately,
these additional terms are
the lower order terms.
 \end{remark}
\begin{remark}
In dimension $d=2$, the Gagliardo-Nirenberg inequality
  guarantees that
$$\|v\|_{L^4(0,T;L^4(\mathbb{T}^2))}\leq C\|v\|_{L^\infty(0,T;L^2(\mathbb{T}^2))}^{\frac{1}{2}}\|\nabla v\|_{L^2(0,T;L^2(\mathbb{T}^2))}^{\frac{1}{2}}\leq C.$$
Therefore, according to Theorem \ref{the1.2},   the bounded density with positive lower bound and natural energy yield  the energy conservation of the weak solutions.
\end{remark}

Taking the natural energy of weak solutions into account, one  immediately  derives  the following
corollary.
 \begin{coro}\label{the1.1}
When the dimension $d=3$, if the  weak solutions  $(\rho, v)$ to the Navier-Stokes equation \eqref{GNS} satisfy one of the following two conditions
 \begin{enumerate}[(1)]
 \item $0<  c_{1}\leq\rho\leq c_{2}<\infty$ , $v\in L^{\infty}(0,T; L^{2}(\mathbb{T}^{3}))$, $ \nabla v\in L^{2}(0,T; L^{2}(\mathbb{T}^{3}))$ and $\sqrt{\rho_0}v_0\in L^{2+\delta}(\mathbb{T}^3)$ for any $\delta>0$,
 \be\label{vei}
v\in L^{p}(0,T;L^{q}(\mathbb{T}^{3}))  ~\text{with}~ \left\{ \ba &\f{2}{p}+
\f{2}{q}=1, q\geq 4,\\&\f{1}{p}+
\f{3}{q}=1, 3<q< 4;\ea\right. \ee
 \item $0<  c_{1}\leq\rho\leq c_{2}<\infty,$   $v\in L^{\infty}(0,T; L^{2}(\mathbb{T}^{3}))$  , $ \nabla v\in L^{2}(0,T; L^{2}(\mathbb{T}^{3}))$ and $\sqrt{\rho_0}v_0\in L^{2+\delta}(\mathbb{T}^3)$ for any $\delta>0$,
 \be\label{tivei}
   \nabla v\in L^{p}(0,T;L^{q}(\mathbb{T}^{3}))  ~\text{with}~\left\{ \ba &\frac{1}{p}+\frac{3 }{q}=2,  \  \frac{3 }{2}<q<\frac{9}{5},\\&\frac{1}{p}+\frac{6}{5 q}=1,   \frac{9}{5} \leq q,\ea\right.
 \ee
 then the energy is globally conserved, that is, for any $t\in [0,T]$,
 \begin{equation}
 	\begin{aligned}
 		\mathcal{E}(t) +\int_0^T\int_{\mathbb{T}^3}&\left[\nu(\rho)|\mathbb{D} v|^2+\mu(\rho)|\Div v|^2 \right]dxdt= \mathcal{E}(0),
 \end{aligned}\end{equation}
 where $\mathcal{E}(t)=\int_{\mathbb{T}^3}\left[\frac{1}{2}\rho |v|^2+P(\rho) \right] dx$ and $P(\rho)=\rho \int_{1}^{\rho} \frac{p(z)}{z^2}dz.$
  \end{enumerate}
\end{coro}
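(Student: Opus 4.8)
The plan is to obtain Corollary \ref{the1.1} directly from Theorem \ref{the1.2}. Since the hypotheses $0<c_{1}\le\rho\le c_{2}$, $v\in L^{\infty}(0,T;L^{2}(\mathbb T^{3}))$, $\nabla v\in L^{2}(0,T;L^{2}(\mathbb T^{3}))$ and $\sqrt{\rho_{0}}v_{0}\in L^{2+\delta}$ are common to Theorem \ref{the1.2} and to both parts of the corollary, it suffices to check that in dimension $d=3$ each of the conditions \eqref{vei} and \eqref{tivei}, together with the natural energy bounds, forces \eqref{u} to hold for some admissible pair $(p,q)\in(1,\infty)^{2}$. Everything then reduces to Lebesgue interpolation of mixed-norm spaces and the Sobolev and Gagliardo--Nirenberg inequalities on $\mathbb T^{3}$.

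For \eqref{vei} I would reduce to the single endpoint $(p,q)=(2,2)$ of \eqref{u}, that is, to $v\in L^{4}(0,T;L^{4}(\mathbb T^{3}))$ (note $\tfrac{2p}{p-1}=\tfrac{2q}{q-1}=4$, while $\nabla v\in L^{2}L^{2}$ is exactly the natural energy). In the range $\tfrac{2}{p}+\tfrac{2}{q}=1$, $q\ge4$, interpolating $v\in L^{p}(0,T;L^{q})$ with $v\in L^{\infty}(0,T;L^{2})$ keeps one on the Shinbrot line $\tfrac{2}{a}+\tfrac{2}{b}=1$, which passes through $(a,b)=(4,4)$; the constraints $2\le 4\le q$ and $p\le4$ make this interpolation admissible, so $v\in L^{4}L^{4}$. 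In the range $\tfrac{1}{p}+\tfrac{3}{q}=1$, $3<q<4$, I would first use $\nabla v\in L^{2}L^{2}$ and $\dot H^{1}(\mathbb T^{3})\hookrightarrow L^{6}(\mathbb T^{3})$ to get $v\in L^{2}(0,T;L^{6})$, and then interpolate $v\in L^{p}(0,T;L^{q})$ with $v\in L^{2}(0,T;L^{6})$; since $(p,q)$, $(2,6)$ and $(4,4)$ all lie on the line $\tfrac{1}{a}+\tfrac{3}{b}=1$ and $3<q<4<6$, $p>4$, this again yields $v\in L^{4}L^{4}$. In both cases Theorem \ref{the1.2} applies.

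For \eqref{tivei} the engine is the Gagliardo--Nirenberg estimate on $\mathbb T^{3}$,
\[
\|v\|_{L^{2Q/(Q-1)}(\mathbb T^{3})}\le C\|v\|_{L^{2}(\mathbb T^{3})}^{1-\theta}\|\nabla v\|_{L^{Q}(\mathbb T^{3})}^{\theta}+C\|v\|_{L^{2}(\mathbb T^{3})},\qquad \theta=\frac{3}{5Q-6},
\]
which is available precisely for $Q\ge\tfrac95$ (so that $\theta\le1$). Combined with $v\in L^{\infty}(0,T;L^{2})$ and the finiteness of $(0,T)$, this shows that whenever $\nabla v\in L^{P}(0,T;L^{Q})$ with $Q\ge\tfrac95$ and $\theta\cdot\tfrac{2P}{P-1}\le P$ --- equivalently $\tfrac1P+\tfrac{6}{5Q}\le1$ --- the full criterion \eqref{u} holds with exponents $(P,Q)$. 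In the range $\tfrac1p+\tfrac{6}{5q}=1$, $q\ge\tfrac95$, one may simply take $(P,Q)=(p,q)$, and the condition $\tfrac1P+\tfrac{6}{5Q}\le1$ holds with equality. In the range $\tfrac1p+\tfrac3q=2$, $\tfrac32<q<\tfrac95$, one first interpolates $\nabla v\in L^{p}(0,T;L^{q})$ with the natural energy $\nabla v\in L^{2}(0,T;L^{2})$: this interpolation preserves the line $\tfrac1a+\tfrac3b=2$ and, because $q<\tfrac95<2$, it reaches the point $(P,Q)=(3,\tfrac95)$, which satisfies $\tfrac1P+\tfrac{6}{5Q}=\tfrac13+\tfrac23=1$. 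Thus in every case \eqref{u} holds for a suitable $(P,Q)\in(1,\infty)^{2}$ and Theorem \ref{the1.2} concludes.

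The step I expect to be the main obstacle is the bookkeeping in the last paragraph: pinning down the Gagliardo--Nirenberg exponent $\theta=\tfrac{3}{5Q-6}$, hence the threshold $Q\ge\tfrac95$ and the sharp admissibility relation $\tfrac1P+\tfrac{6}{5Q}\le1$, and then verifying that the two regimes of \eqref{bcz}/\eqref{tivei} meet exactly at the usable point $(3,\tfrac95)$, so that the subcritical-looking regime $\tfrac32<q<\tfrac95$ can be absorbed by interpolating the gradient down to that point. The role of $d=3$ is concentrated here, through the Sobolev exponent $6$ and the number $\tfrac95$; in a general dimension one would run the same scheme with the corresponding dimension-dependent Sobolev exponents.
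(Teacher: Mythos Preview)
Your proposal is correct and, for part~(1) and for the range $q\ge\tfrac95$ of part~(2), it coincides with the paper's proof: both reduce to the choice $(p,q)=(2,2)$ in \eqref{u} via interpolation with $L^{\infty}(L^{2})$ or $L^{2}(L^{6})$ for part~(1), and both use the Gagliardo--Nirenberg inequality with $\theta=\tfrac{3}{5q-6}$ for the high-$q$ regime of part~(2).

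For the low-$q$ regime $\tfrac32<q<\tfrac95$ of part~(2) you take a genuinely different route. You first interpolate $\nabla v\in L^{p}(L^{q})$ with the energy bound $\nabla v\in L^{2}(L^{2})$ along the line $\tfrac1a+\tfrac3b=2$ to reach the single pivot $(P,Q)=(3,\tfrac95)$, and then invoke the endpoint Sobolev embedding $W^{1,9/5}(\mathbb T^{3})\hookrightarrow L^{9/2}(\mathbb T^{3})$ (the $\theta=1$ case of your Gagliardo--Nirenberg estimate) to verify \eqref{u} at that one pair. The paper instead keeps the given $(p,q)$ and applies a second Gagliardo--Nirenberg inequality,
\[
\|v\|_{L^{2q/(q-1)}}\le C\|v\|_{L^{6}}^{\frac{9-5q}{6-3q}}\|\nabla v\|_{L^{q}}^{\frac{2q-3}{6-3q}},
\]
combined with $H^{1}\hookrightarrow L^{6}$, to obtain $v\in L^{2p/(p-1)}(L^{2q/(q-1)})$ directly for the original pair. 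Your reduction to a pivot is tidier and makes the significance of the threshold $q=\tfrac95$ completely transparent; the paper's argument has the marginal advantage of realizing \eqref{u} at the same $(p,q)$ that appears in the hypothesis, but since Theorem~\ref{the1.2} only asks for \emph{some} admissible pair this buys nothing extra.
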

\begin{remark}
Compared with result \eqref{NNT} obtained by Nguye-Nguye-Tang \cite{[NNT]}, conditions \ref{vei} only required the density is bounded  from below and above. Hence, result \eqref{vei} is an improvement of \eqref{NNT} in \cite{[NNT]}.
\end{remark}
\begin{remark}
We extend the  energy conservation  criteria  \eqref{Shinbrot}-\eqref{bcz} from incompressible Navier-Stokes equations to
general compressible Navier-Stokes equations with no vacuum.
\end{remark}
\begin{remark}
In
contrast with \eqref{lns},
the generalization   in \eqref{tivei} is threefold: first, to improve the
corresponding results in \eqref{tivei}; second, to consider the more general equations; third, we can get the energy conservation up to the initial time $t=0$.
\end{remark}

\begin{remark}
It seems that
a new strategy for studying the  energy equality of
fluid equations
is to firstly establish a
conservation criterion based on  a combination of velocity and its gradient, which may
be applied to other incompressible and compressible fluid equations.
 A successful application can be found in \cite{[WYM]}.
\end{remark}

\begin{remark}
In the forthcoming work \cite{[YWW]}, the energy conservation criterion   for the weak solutions of  general compressible Navier-Stokes equations allowing  vacuum will be considered.
\end{remark}

Finally, as   \cite{[NNT]}, one can establish  the results  parallel to  Theorem \ref{the1.2} and Corollary \ref{the1.1} for the non-homogenous incompressible Navier-Stokes equations below
\be\left\{\ba\label{nhins}
&\rho_{t} + \Div (\rho v)   =0,   \\
&(\rho v)_{t} +\Div(\rho v\otimes v)- \Div(\nu(\rho)\mathbb{D}v) +\nabla\pi = 0,\\
&\Div v=0,\\
&(\rho,v)|_{t=0}=(\rho_0,u_0),
\ea\right.\ee
we leave this to the interested readers.

The remainder of this paper is organized as follows. Section 2 is devoted  to the
auxiliary lemmas involving mollifier  and the key inequality \eqref{keyin}.
In section 3, we first
 present the proof of Theorem \ref{the1.2}. Then, based on Theorem \ref{the1.2},
 we
 complete the proof of Corollary \ref{the1.1}.
\section{Notations and some auxiliary lemmas} \label{section2}

First, we introduce some notations used in this paper.
 For $p\in [1,\,\infty]$, the notation $L^{p}(0,\,T;X)$ stands for the set of measurable functions on the interval $(0,\,T)$ with values in $X$ and $\|f(\cdot,t)\|_{X}$ belonging to $L^{p}(0,\,T)$. The classical Sobolev space $W^{k,p}(\mathbb{T}^d)$ is equipped with the norm $\|f\|_{W^{k,p}(\mathbb{T}^d)}=\sum\limits_{\alpha =0}^{k}\|D^{\alpha}f\|_{L^{p}(\mathbb{T}^d)}$. The space  $C^{\infty}_{b}(\mathbb{T}^d)$ is the bounded smooth functions on $\mathbb{T}^d$. $c_1,c_2$ and $C$ are positive constants. For simplicity, we denote by $$\int_0^T\int_{\mathbb{T}^{d}} f(x,t)dxdt=\int_0^T\int f\ ~~\text{and}~~ \|f\|_{L^p(0,T;X )}=\|f\|_{L^p(X)}.$$

Let $\eta_{\varepsilon}:\mathbb{R}^{d}\rightarrow \mathbb{R}$ be a standard mollifier.i.e. $\eta(x)=C_0e^{-\frac{1}{1-|x|^2}}$ for $|x|<1$ and $\eta(x)=0$ for $|x|\geq 1$, where $C_0$ is a constant such that $\int_{\mathbb{R}^d}\eta (x) dx=1$. For $\varepsilon>0$, we define the rescaled mollifier $\eta_{\varepsilon}(x)=\frac{1}{\varepsilon^d}\eta(\frac{x}{\varepsilon})$. For any function $f\in L^1_{loc}(\Omega)$, its mollified version is defined as
$$f^\varepsilon(x)=(f*\eta_{\varepsilon})(x)=\int_{\mathbb{R}^d}f(x-y)\eta_{\varepsilon}(y)dy,\ \ x\in \Omega_\varepsilon,$$
where $\Omega_\varepsilon=\{x\in \Omega: d(x,\partial\Omega)>\varepsilon\}.$

We first recall the results involving the mollifier established in
\cite{[NNT]}.
\begin{lemma}(\cite{[NNT]})\label{lem2.1}
Suppose that $f\in L^{p}(0,T;L^{q}(\mathbb{T}^{d}))$. Then for any $\varepsilon>0$, there holds
\be\label{}
\|\nabla f^{\varepsilon}\|_{L^{p}(0,T;L^{q}(\mathbb{T}^{d}))}
\leq C\varepsilon^{-1}\| f\|_{L^{p}(0,T;L^{q}(\mathbb{T}^{d}))},
\ee
and, if $p,q<\infty$
$$
\limsup_{\varepsilon\rightarrow0} \varepsilon\|\nabla f^{\varepsilon}\|_{L^{p}(0,T;L^{q}(\mathbb{T}^{d}))}=0.
$$
Moreover, if $0<c_{1}\leq g\leq c_{2}<\infty$, then there holds, for any $\varepsilon>0$,
\be\ba
\B\|\nabla \f{f^{\varepsilon}}{g^{\varepsilon}}\B\|_{L^{p}(0,T;L^{q}(\mathbb{T}^{d}))}\leq C
\varepsilon^{-1}\|f\|_{L^{p}(0,T;L^{q}(\mathbb{T}^{d}))},
\ea\ee
and if $p,q<\infty$
\be
\limsup_{\varepsilon\rightarrow0} \varepsilon\B\|\nabla \f{f^{\varepsilon}}{g^{\varepsilon}}\B\|_{L^{p}(0,T;L^{q}(\mathbb{T}^{d}))}
=0.\ee
\end{lemma}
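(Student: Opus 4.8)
The plan is to derive all four estimates from Young's convolution inequality, from the scalings $\|\eta_{\varepsilon}\|_{L^{1}(\mathbb{R}^{d})}=1$ and $\|\nabla\eta_{\varepsilon}\|_{L^{1}(\mathbb{R}^{d})}=C_{0}\varepsilon^{-1}$ with $C_{0}:=\|\nabla\eta\|_{L^{1}(\mathbb{R}^{d})}$, and from the cancellation $\int_{\mathbb{R}^{d}}\nabla\eta_{\varepsilon}\,dx=0$. The two $\varepsilon^{-1}$ bounds come first and are immediate: since $\nabla f^{\varepsilon}=f*\nabla\eta_{\varepsilon}$, Young's inequality gives $\|\nabla f^{\varepsilon}(\cdot,t)\|_{L^{q}}\le C_{0}\varepsilon^{-1}\|f(\cdot,t)\|_{L^{q}}$, and taking the $L^{p}$ norm in $t$ proves the first one; for the quotient I would use the identity $\nabla\B(f^{\varepsilon}/g^{\varepsilon}\B)=\nabla f^{\varepsilon}/g^{\varepsilon}-f^{\varepsilon}\nabla g^{\varepsilon}/(g^{\varepsilon})^{2}$, together with $c_{1}\le g^{\varepsilon}\le c_{2}$, $\|\nabla g^{\varepsilon}\|_{L^{\infty}}\le c_{2}C_{0}\varepsilon^{-1}$ and $\|f^{\varepsilon}(\cdot,t)\|_{L^{q}}\le\|f(\cdot,t)\|_{L^{q}}$, to bound $\|\nabla(f^{\varepsilon}/g^{\varepsilon})(\cdot,t)\|_{L^{q}}$ by $C(c_{1},c_{2})\varepsilon^{-1}\|f(\cdot,t)\|_{L^{q}}$, and then integrate in $t$.

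For the two $\limsup$ statements one has to extract decay in $\varepsilon$ from the zero mean of $\nabla\eta_{\varepsilon}$. For any $h\in L^{q}(\mathbb{T}^{d})$ one writes $\nabla h^{\varepsilon}(x)=\int[h(x-y)-h(x)]\nabla\eta_{\varepsilon}(y)\,dy$, whence $\|\nabla h^{\varepsilon}\|_{L^{q}}\le C_{0}\varepsilon^{-1}\omega_{q}(h;\varepsilon)$, where $\omega_{q}(h;\varepsilon):=\sup_{|y|\le\varepsilon}\|h(\cdot-y)-h\|_{L^{q}(\mathbb{T}^{d})}\to0$ as $\varepsilon\to0$ since $q<\infty$. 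Taking $h=f(\cdot,t)$ gives $\varepsilon\|\nabla f^{\varepsilon}(\cdot,t)\|_{L^{q}}\le C_{0}\omega_{q}(f(\cdot,t);\varepsilon)\to0$ for a.e.\ $t$, with left-hand side $\le 2C_{0}\|f(\cdot,t)\|_{L^{q}}\in L^{p}(0,T)$; dominated convergence then yields the second statement.

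The fourth statement needs $\varepsilon\|f^{\varepsilon}\nabla g^{\varepsilon}/(g^{\varepsilon})^{2}\|_{L^{q}}\to0$, and this is the one non-routine point, since the naive bound $\|f^{\varepsilon}\nabla g^{\varepsilon}\|_{L^{q}}\le\|f^{\varepsilon}\|_{L^{q}}\|\nabla g^{\varepsilon}\|_{L^{\infty}}$ loses a full, non-vanishing factor $\varepsilon^{-1}$ ($g$ is merely bounded, so its translates need not converge in $L^{\infty}$). Instead I would compare $f^{\varepsilon}\nabla g^{\varepsilon}$ with $\nabla(fg)^{\varepsilon}=(fg)*\nabla\eta_{\varepsilon}$: expanding the convolutions,
\be
f^{\varepsilon}(x)\nabla g^{\varepsilon}(x)-\nabla(fg)^{\varepsilon}(x)=\int\big[f^{\varepsilon}(x)-f(x-y)\big]\,g(x-y)\,\nabla\eta_{\varepsilon}(y)\,dy,
\ee
so that, using $|g|\le c_{2}$ and $\|f^{\varepsilon}-f(\cdot-y)\|_{L^{q}}\le\omega_{q}(f;2\varepsilon)$ for $|y|\le\varepsilon$, one obtains $\|f^{\varepsilon}\nabla g^{\varepsilon}-\nabla(fg)^{\varepsilon}\|_{L^{q}}\le c_{2}C_{0}\varepsilon^{-1}\omega_{q}(f;2\varepsilon)$. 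Combined with $\|\nabla(fg)^{\varepsilon}\|_{L^{q}}\le C_{0}\varepsilon^{-1}\omega_{q}(fg;\varepsilon)$ — legitimate because $fg\in L^{q}(\mathbb{T}^{d})$ since $g\in L^{\infty}$ — this gives $\varepsilon\|f^{\varepsilon}\nabla g^{\varepsilon}\|_{L^{q}}\le C_{0}\big(c_{2}\,\omega_{q}(f;2\varepsilon)+\omega_{q}(fg;\varepsilon)\big)\to0$ for a.e.\ $t$. Plugging this and the second statement into the quotient identity and running dominated convergence in $t$ as before (the integrand is $\le C(c_{1},c_{2})\|f(\cdot,t)\|_{L^{q}}\in L^{p}(0,T)$) completes the proof.

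The step I expect to be the main obstacle is precisely this cross term $\varepsilon\|f^{\varepsilon}\nabla g^{\varepsilon}/(g^{\varepsilon})^{2}\|$ in the fourth estimate: an $L^{\infty}$ bound on $\nabla g^{\varepsilon}$ is fatal and $g$ carries no quantitative regularity beyond $c_{1}\le g\le c_{2}$, so one is forced to route the argument through $\nabla(fg)^{\varepsilon}$ and play the zero-mean cancellation of $\nabla\eta_{\varepsilon}$ against the $L^{q}$-modulus of continuity of $fg$ (which decays only because $q<\infty$). All the remaining steps are Young's inequality and dominated convergence.
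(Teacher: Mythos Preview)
The paper does not prove this lemma; it is recalled from \cite{[NNT]} without argument. Your proof is correct. The two $\varepsilon^{-1}$ bounds are exactly Young's inequality plus the quotient rule, and your treatment of the $\limsup$ statements via the zero mean of $\nabla\eta_{\varepsilon}$ and the $L^{q}$-modulus of continuity, followed by dominated convergence in $t$, is standard and sound.

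Your handling of the cross term $f^{\varepsilon}\nabla g^{\varepsilon}/(g^{\varepsilon})^{2}$ by routing through $\nabla(fg)^{\varepsilon}$ is correct and elegant: the identity places the oscillation on $f$ rather than on $g$, and since $fg\in L^{q}(\mathbb{T}^{d})$ one can invoke its $L^{q}$-modulus of continuity. An alternative, slightly more in the spirit of the density argument the paper uses in Lemma~\ref{lem2.2}, is to approximate $f$ by smooth $f_{n}$ in $L^{p}(0,T;L^{q})$: the third estimate gives $\varepsilon\|\nabla((f-f_{n})^{\varepsilon}/g^{\varepsilon})\|_{L^{p}(L^{q})}\le C\|f-f_{n}\|_{L^{p}(L^{q})}$, while for smooth $f_{n}$ one has $\|f_{n}^{\varepsilon}\|_{L^{\infty}}\le\|f_{n}\|_{L^{\infty}}$ and, crucially, $\varepsilon\|\nabla g^{\varepsilon}\|_{L^{q}(\mathbb{T}^{d})}\le C_{0}\omega_{q}(g;\varepsilon)\to0$ because $g\in L^{\infty}(\mathbb{T}^{d})\subset L^{q}(\mathbb{T}^{d})$ on the torus. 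Both routes work; yours avoids the approximation step at the cost of the comparison identity.
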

The next lemma with $p=q, p_{1}=q_{1}, p_{2}=q_{2}$ was  proved in \cite{[NNT]}.
We   generalize it by extending the integral  norms with different exponents in space and time.
\begin{lemma} \label{lem2.2}   Let $1\leq p,q,p_1,p_2,q_1,q_2\leq \infty$  with $\frac{1}{p}=\frac{1}{p_1}+\frac{1}{p_2}$ and $\frac{1}{q}=\frac{1}{q_1}+\frac{1}{q_2}$. Assume $f\in L^{p_1}(0,T;W^{1,q_1}(\mathbb{T}^d))$ and $g\in L^{p_2}(0,T;L^{q_2}(\mathbb{T}^d))$. Then for any $\varepsilon> 0$, there holds
		\begin{align} \label{fg'}
		\|(fg)^\varepsilon-f^\varepsilon g^\varepsilon\|_{L^p(0,T;L^q(\mathbb{T}^d))}\leq C\varepsilon \|f\|_{L^{p_1}(0,T;W^{1,q_1}( \mathbb{T}^d))}\|g\|_{L^{p_2}(0,T;L^{q_2}(\mathbb{T}^d))}.
		\end{align}
		Moreover, if $p_2,q_2<\infty$ then
		\begin{align}\label{limite'}
		\limsup_{\varepsilon \to 0}\varepsilon^{-1} \|(fg)^\varepsilon-f^\varepsilon g^\varepsilon\|_{L^p(0,T;L^q(\mathbb{T}^d))}=0.
		\end{align}
	\end{lemma}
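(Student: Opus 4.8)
Since $\eta_{\varepsilon}$ mollifies only in the spatial variable, the time variable is a mere parameter, so the whole statement reduces to a pointwise–in–time inequality followed by a Hölder inequality in time with $\frac1p=\frac1{p_1}+\frac1{p_2}$. The plan is therefore to first establish, for a.e.\ $t\in(0,T)$,
\be
\big\|(fg)^{\varepsilon}(\cdot,t)-f^{\varepsilon}(\cdot,t)\,g^{\varepsilon}(\cdot,t)\big\|_{L^{q}(\mathbb{T}^{d})}\le C\varepsilon\,\|\nabla f(\cdot,t)\|_{L^{q_{1}}(\mathbb{T}^{d})}\,\|g(\cdot,t)\|_{L^{q_{2}}(\mathbb{T}^{d})},
\ee
and then to take the $L^{p}(0,T)$ norm in time. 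The starting point is the Constantin--E--Titi commutator identity: with $\delta_{y}h(x):=h(x-y)-h(x)$, a direct expansion of the convolutions gives
\be
(fg)^{\varepsilon}(x)-f^{\varepsilon}(x)g^{\varepsilon}(x)=\int_{\mathbb{R}^{d}}\eta_{\varepsilon}(y)\,\delta_{y}f(x)\,\delta_{y}g(x)\,dy-\big(f^{\varepsilon}-f\big)(x)\big(g^{\varepsilon}-g\big)(x).
\ee

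For the first (principal) term I will use that for $h\in W^{1,q_{1}}(\mathbb{T}^{d})$ one has $\|\delta_{y}h\|_{L^{q_{1}}}\le|y|\,\|\nabla h\|_{L^{q_{1}}}$ (from $h(x-y)-h(x)=-\int_{0}^{1}y\cdot\nabla h(x-sy)\,ds$, Minkowski's integral inequality and translation invariance of the norm), while trivially $\|\delta_{y}g\|_{L^{q_{2}}}\le 2\|g\|_{L^{q_{2}}}$. Since $\mathrm{supp}\,\eta_{\varepsilon}\subset\{|y|\le\varepsilon\}$, Minkowski's integral inequality in $y$ together with Hölder in $x$ ($\frac1q=\frac1{q_1}+\frac1{q_2}$) bounds this term by $2\varepsilon\|\nabla f\|_{L^{q_1}}\|g\|_{L^{q_2}}$. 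For the second term, the standard mollifier estimate $\|f^{\varepsilon}-f\|_{L^{q_1}}\le\varepsilon\|\nabla f\|_{L^{q_1}}$ and Young's convolution inequality $\|g^{\varepsilon}-g\|_{L^{q_2}}\le2\|g\|_{L^{q_2}}$, again with Hölder in $x$, give the same kind of bound. Integrating in time and applying Hölder with $\frac1p=\frac1{p_1}+\frac1{p_2}$ yields \eqref{fg'}. This part needs nothing beyond $1\le p_i,q_i\le\infty$.

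For the refined statement \eqref{limite'} the factor $\varepsilon$ must not be extracted crudely; I will instead exploit cancellation. Rescaling $y=\varepsilon z$ and writing $\delta_{\varepsilon z}f(x)=-\varepsilon\int_{0}^{1}z\cdot\nabla f(x-s\varepsilon z)\,ds$, the first term of the identity, divided by $\varepsilon$, equals $-\int\eta(z)\int_{0}^{1}\big(z\cdot\nabla f(x-s\varepsilon z)\big)\big(g(x-\varepsilon z)-g(x)\big)\,ds\,dz$; taking $L^{q}_{x}$ then $L^{p}_{t}$ norms and using Hölder and Minkowski (the $s$–integral costing only a harmless factor, since $\|\nabla f(\cdot-s\varepsilon z)\|_{L^{q_1}}=\|\nabla f\|_{L^{q_1}}$) produces the majorant $\|\nabla f\|_{L^{p_1}(0,T;L^{q_1})}\int\eta(z)\,|z|\,\|g(\cdot-\varepsilon z)-g\|_{L^{p_2}(0,T;L^{q_2})}\,dz$. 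Because $p_{2},q_{2}<\infty$, translations are continuous in $L^{p_2}(0,T;L^{q_2}(\mathbb{T}^d))$, so for each fixed $z$ the inner norm tends to $0$ as $\varepsilon\to0$; being dominated by $2\|g\|_{L^{p_2}(0,T;L^{q_2})}$ on the compact support of $\eta$, dominated convergence kills the whole integral. The term $\varepsilon^{-1}(f^{\varepsilon}-f)(g^{\varepsilon}-g)$ is handled identically: $\varepsilon^{-1}\|f^{\varepsilon}-f\|_{L^{q_1}}\le\|\nabla f\|_{L^{q_1}}$ stays bounded in $L^{p_1}(0,T)$, while $\|g^{\varepsilon}-g\|_{L^{p_2}(0,T;L^{q_2})}\to0$ by the usual mollifier convergence (here again $p_2,q_2<\infty$ is essential), so a Hölder inequality in time completes it.

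The one genuinely delicate point is this last half: passing from the $O(\varepsilon)$ bound to $o(\varepsilon)$ is not a sharper inequality but a compactness statement, and it relies on the \emph{strong} continuity of spatial translations (equivalently, absolute continuity of the $L^{p_2}(L^{q_2})$ norm), which is precisely why the hypotheses require $p_{2},q_{2}<\infty$ — if either exponent were $\infty$ only the $O(\varepsilon)$ estimate would survive. Everything else is bookkeeping, namely tracking which exponents may equal $\infty$ in each Hölder pairing; and since on $\mathbb{T}^d$ there is no boundary (so $f^{\varepsilon},g^{\varepsilon}$ are defined on all of $\mathbb{T}^d$), the argument of \cite{[NNT]} for the case $p=q$, $p_i=q_i$ carries over verbatim with these modifications.
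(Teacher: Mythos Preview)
Your argument is correct and uses the same Constantin--E--Titi decomposition as the paper, but the execution differs in two respects. For the $O(\varepsilon)$ bound, the paper works pointwise in $x$ via H\"older with auxiliary exponents $s_1\le q_1$, $s_2\le q_2$ (with $\tfrac{1}{s_1}+\tfrac{1}{s_2}=1$) and the mean--value theorem, producing the majorant $(|\nabla f|^{s_1}*J_\varepsilon)^{1/s_1}(|g|^{s_2}*J_{1\varepsilon}+|g|^{s_2})^{1/s_2}$ and then applying Minkowski; you instead take Minkowski in $y$ first and then H\"older in $x$, which is shorter and avoids the artificial parameters $s_1,s_2$. For the sharpened $o(\varepsilon)$ statement, the paper proceeds by density, approximating $g$ by smooth $g_n$ and splitting the commutator into one piece of size $O(\varepsilon)\|g-g_n\|$ and another of size $O(\varepsilon^2)\|\nabla g_n\|$; your approach instead inserts the integral representation of $\delta_{\varepsilon z}f$ and invokes strong continuity of spatial translations in $L^{p_2}(0,T;L^{q_2})$ together with dominated convergence in $z$. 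Both routes are standard; yours is more self-contained (no need to manufacture the approximants $g_n$), while the paper's density argument makes the role of the hypothesis $p_2,q_2<\infty$ enter only through the availability of a smooth approximating sequence. Either way the key point you correctly isolate --- that the passage from $O(\varepsilon)$ to $o(\varepsilon)$ is a compactness statement rather than an improved inequality --- is exactly what drives both arguments.
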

 	\begin{proof}
		Thanks to the fact observed in    \cite{[CWT]} and the ideas in \cite{[Lions1]}, we know that
		\begin{align}\label{b14}
		(fg)^\varepsilon-f^\varepsilon g^\varepsilon=R^\varepsilon-(f^\varepsilon-f)(g^\varepsilon-g),
		\end{align}
		where
		\begin{align*}
		R^\varepsilon(x,t):=\int_{\mathbb{R}^d}\big(f\left(y,t\right)-f(x,t)\big) \big(g(y,t)-g(x,t)\big)\eta_\varepsilon(x-y)dy.
		\end{align*}
Using the triangle's inequality, it yields that
		\begin{align}\label{b15}
		\|(fg)^\varepsilon-f^\varepsilon g^\varepsilon\|_{L^q(\mathbb{T}^d) }\leq C\left(\|R^\varepsilon\|_{L^q(\mathbb{T}^d) }+\|(f-f^\varepsilon)(g-g^\varepsilon)\|_{L^q(\mathbb{T}^d)}\right).
		\end{align}	
Let $B(x,\varepsilon)=\{y\in \mathbb{T}^d; |x-y|<\varepsilon\}$, then by means of 		 H\"older's inequality  and direct computation, we see that
		\begin{equation} \label{b16}
		\begin{aligned}
	|R^\varepsilon|&\leq \int _{B(x,\varepsilon)}\frac{1}{\varepsilon^d}|f(y)-f(x)||g(y)-g(x)| dy\\
	&\leq C\left(\frac{1}{\varepsilon^d}\int_{B(x,\varepsilon)}|f(y)-f(x)|^{s_1}dy\right)^{\frac{1}{s_1}}\left(\frac{1}{\varepsilon^d}\int_{B(x,\varepsilon)}|g(y)-g(x)|^{s_2}dy\right)^{\frac{1}{s_2}}\\
	&\leq C\varepsilon\left(\frac{1}{\varepsilon^d}\int_{B(x,\varepsilon)}\int_{0}^{1}|\nabla f(x+(y-x)s)|^{s_1}ds dy\right)^{\frac{1}{s_1}}\left(\frac{1}{\varepsilon^d}\int_{B(x,\varepsilon)} |g(y)|^{s_2} dy +|g(x)|^{s_2}\right)^{\frac{1}{s_2}}\\
	&\leq C\varepsilon\left(\int_{B(0,1)}\int_0^1 |\nabla f(x+\omega\varepsilon s)|^{s_1}ds d\omega \right)^{\frac{1}{s_1}}\left(\int_{B(0,1)}|g(x+\omega\varepsilon)|^{s_2}d\omega +|g(x)|^{s_2}\right)^{\frac{1}{s_2}}\\
	&\leq C\varepsilon\left(\int_{\mathbb{R}^d}|
	\nabla f(x-z)|^{s_1}\int_0^1\frac{\bf{1}_{B(0,\varepsilon s)}(z)}{(\varepsilon s)^d}dsdz\right)^{\frac{1}{s_1}}\left(\int_{\mathbb{R}^d}|g(x-z)|^{s_2}\frac{\bf{1}_{B(0,\varepsilon)(z)}}{\varepsilon^d}dz+|g(x)|^{s_2}\right)^{\frac{1}{s_2}}\\
	&\leq C\left(|\nabla f|^{s_1}*J_\varepsilon\right)^{\frac{1}{s_1}}\left(|g|^{s_2}*J_{1\varepsilon}+|g(x)|^{s_2}\right)^{\frac{1}{s_2}},
		\end{aligned}
		\end{equation}
		where $s_1\leq q_1, s_2\leq q_2$ with $\frac{1}{s_1}+\frac{1}{s_2}=1$, $J_\varepsilon=\int_0^1 \frac{\bf{1}_{B(0,\varepsilon s)}}{(\varepsilon s)^d} ds\geq 0$, $J_{1\varepsilon}=\frac{\bf{1}_{B(0,\varepsilon)}}{\varepsilon^d}\geq 0$ and $\int_{\mathbb{R}^d}\int_0^1 \frac{\bf{1}_{B(0,\varepsilon s)}}{(\varepsilon s)^d} dsdz=\int_{\mathbb{R}^d}\frac{\bf{1}_{B(0,\varepsilon)}}{\varepsilon^d}dz=measure (B(0,1)).$

Then in view of  the Minkowski inequality, we  conclude that
\begin{equation}\label{b17}
\begin{aligned}
\|R^\varepsilon\|_{L^q}&\leq C\varepsilon\|\left(|\nabla f|^{s_1} *J_\varepsilon\right)^{\frac{1}{s_1}}\left(|g|^{s_2} *J_{1\varepsilon}+|g(x)|^{s_2}\right)^{\frac{1}{s_2}}\|_{L^q}\\
&\leq C\varepsilon\left[\|(|\nabla f|^{s_1}* J_\varepsilon)^{\frac{1}{s_1}}\|_{L^{q_1}}\left(\|(|g|^{s_2}*J_{1\varepsilon})^{\frac{1}{s_2}}\|_{L^{q_2}}+\|g\|_{L^{q_2}}\right)\right]\\
&\leq C\varepsilon\|\nabla f\|_{L^{q_1}}\|g\|_{L^{q_2}}.
\end{aligned}
\end{equation}		
Furthermore, one has
\begin{equation}\label{b18}
\begin{aligned}
&|(f^\varepsilon-f)(g^\varepsilon-g)|\\\leq& \int |(f(y)-f(x))|\eta_\varepsilon(x-y)dy\int |(g(y)-g(x))|\eta_\varepsilon(x-y)dy\\
\leq& C\varepsilon\left(\frac{1}{\varepsilon^d}\int_{B(x,\varepsilon)}\int_0^1|\nabla f(x+(y-x)s)|ds dy\right)\left(\frac{1}{\varepsilon^d}\int_{B(x,\varepsilon)}|g(y)-g(x)|dy\right)\\
\leq& C\varepsilon \left(\frac{1}{\varepsilon^d}\int_{B(x,\varepsilon)}\int_0^1|\nabla f(x+(y-x)s)|^{s_1}ds dy\right)^{\frac{1}{s_1}}\left(\frac{1}{\varepsilon^d}\int_{B(x,\varepsilon)}|g(y)-g(x)|^{s_2}dy\right)^{\frac{1}{s_2}}.
\end{aligned}
\end{equation}
Along the same lines of derivation of 	
  \eqref{b16} and \eqref{b17}, we arrive at
\begin{equation}\label{b19}
\|(f^\varepsilon-f)(g^\varepsilon -g)\|_{L^q}\leq C\varepsilon\|\nabla f\|_{L^{q_1}}\|g\|_{L^{q_2}}.
\end{equation}	
In combination with \eqref{b14}, \eqref{b17} and \eqref{b19}	and using the H\"older's inequality with respect to time, we can deduce the result \eqref{fg'}.

	 Furthermore, if $q_1, q_2<\infty$, let $\{g_n\}\in C_{b}^\infty (\mathbb{T}^d)$ with $ g_n\rightarrow g$ strongly in $L^{q_2}$. Thus, by  density arguments, we find that
	 \begin{equation}
	 \begin{aligned}
	 \|(fg)^\varepsilon-f^\varepsilon g^\varepsilon\|_{L^q}&\leq C\left\|(\left(f(g-g_n)\right)^\varepsilon+(f g_n)^\varepsilon-f^\varepsilon(g-g_n)^\varepsilon-f^\varepsilon g_n ^\varepsilon\|_{L^q}\right)\\
	 &\leq C \left(\|\left(f(g-g_n)\right)^\varepsilon-f^\varepsilon (g-g_n)^\varepsilon\|_{L^q}+\|(f g_n)^\varepsilon-f^\varepsilon g_n^\varepsilon\|_{L^q}\right)\\
	 &\leq C\left(\varepsilon\|\nabla f\|_{L^{q_1}}\|g-g_n\|_{L^{q_2}}+\varepsilon^2\|\nabla f\|_{L^{q_1}}\|\nabla g_n\|_{L^{q_2}}\right),
	 \end{aligned}
	 \end{equation}
which  means
	\begin{equation}
	\begin{aligned}
	\varepsilon^{-1}\|(fg)^\varepsilon-f^\varepsilon g^\varepsilon\|_{L^q}\leq C\left(\|\nabla f\|_{L^{q_1}}\|g-g_n\|_{L^{q_2}}+\varepsilon\|\nabla f\|_{L^{q_1}}\|\nabla g_n\|_{L^{q_2}}\right),
	\end{aligned}
	\end{equation}	
	hence, as $\varepsilon\rightarrow 0$ and $n\rightarrow \infty $ , we can obtain that
	\begin{equation}
	\begin{aligned}
	&\varepsilon^{-1}\|(fg)^\varepsilon-f^\varepsilon g^\varepsilon\|_{L^p(L^q)}\\
\leq&  C\left(\int_0^T\left(\|\nabla f\|_{L^{q_1}}\|g-g_n\|_{L^{q_2}}+\varepsilon\|\nabla f\|_{L^{q_1}}\|\nabla g_n\|_{L^{q_2}}\right)^p dt\right)^{\frac{1}{p}}\\
\leq& C\left(\int_0^T(\|\nabla f\|_{L^{q_1}}\|g-g_n\|_{L^{q_2}})^p dt\right)^{\frac{1}{p}}+C\varepsilon\left(\int_0^T (\|\nabla f\|_{L^{q_1}}\|\nabla g_n\|_{L^{q_2}})^p dt\right)^{\frac{1}{p}}\\
\leq &C\|\nabla f\|_{L^{p_1}(L^{q_1})}\|g-g_{n}\|_{L^{p_2}(L^{q_2})}+\varepsilon\|\nabla f\|_{L^{p_1}(L^{q_1})}\|\nabla g_n\|_{L^{p_2}(L^{q_2})}\rightarrow 0.
	\end{aligned}
	\end{equation}	
Then, we have completed the proof of Lemma \ref{lem2.2}.
	\end{proof}
The next lemma is the  key to remove $\eqref{NNT}_{2}$.
\begin{lemma}\label{lem2.3}
		Assume that $0<\underline{\rho}\leq \rho (x,t)\leq \overline{\rho}<\infty$ and $v\in W^{1,p}(\mathbb{T}^d)$ with $1\leq p\leq \infty$. Then
		\begin{equation}\label{b1}
\B\|\partial\left(\frac{(\rho v)^\varepsilon}{\rho ^\varepsilon}\right)\B\|_{L^p(\mathbb{T}^d)}\leq C\|\nabla v\|_{L^p(\mathbb{T}^d)}.
		\end{equation}
	\end{lemma}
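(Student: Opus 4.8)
The plan is to reduce \eqref{b1} to a Constantin--E--Titi type convolution estimate of exactly the kind already carried out in the proof of Lemma \ref{lem2.2}. Treating $v$ as scalar-valued (the vector case being componentwise), write $N=(\rho v)^{\varepsilon}$ and $D=\rho^{\varepsilon}$, so that $\f{(\rho v)^{\varepsilon}}{\rho^{\varepsilon}}=N/D$ and
\[
\partial\B(\f{N}{D}\B)=\f{D\,\partial N-N\,\partial D}{D^{2}}.
\]
Since $D=\rho^{\varepsilon}\ge\underline{\rho}>0$ pointwise, the factor $D^{-2}$ is harmless (bounded by $\underline{\rho}^{-2}$); the whole issue is to rewrite the numerator so that $\nabla(\rho v)$, which is not controlled, never appears. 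From $\partial N(x)=\int\rho(y)v(y)\,\partial_{x}\eta_{\varepsilon}(x-y)\,dy$ and $\partial D(x)=\int\rho(z)\,\partial_{x}\eta_{\varepsilon}(x-z)\,dz$, multiplying out the products $D\,\partial N$ and $N\,\partial D$ and relabelling $y\leftrightarrow z$ in the latter, one gets the symmetrized identity
\[
D(x)\,\partial N(x)-N(x)\,\partial D(x)=\iint \rho(y)\rho(z)\big(v(y)-v(z)\big)\,\eta_{\varepsilon}(x-z)\,\partial_{x}\eta_{\varepsilon}(x-y)\,dy\,dz,
\]
in which only the \emph{difference} $v(y)-v(z)$ of the velocity occurs (this is also where $\int\partial_{x}\eta_{\varepsilon}(x-y)\,dy=0$ is implicitly used). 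Obtaining this identity is the key point and the only genuinely new step; I expect it to be the main obstacle, since everything after it is routine.

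Next I would estimate pointwise. Bounding $\rho(y)\rho(z)\le\overline{\rho}^{\,2}$, and splitting $|v(y)-v(z)|\le|v(y)-v(x)|+|v(x)-v(z)|$, the two resulting double integrals factor: in one, the $z$-integral of $\eta_{\varepsilon}(x-z)$ contributes $1$; in the other, the $y$-integral of $|\partial_{x}\eta_{\varepsilon}(x-y)|$ contributes $C\varepsilon^{-1}$. Using $|\partial_{x}\eta_{\varepsilon}|\le C\varepsilon^{-d-1}\mathbf{1}_{B(x,\varepsilon)}$, $\eta_{\varepsilon}\le C\varepsilon^{-d}\mathbf{1}_{B(x,\varepsilon)}$, together with $|v(a)-v(x)|\le\varepsilon\int_{0}^{1}|\nabla v(x+s(a-x))|\,ds$ for $a\in B(x,\varepsilon)$ (the segment stays inside $B(x,\varepsilon)$ by convexity, and the bound follows from the fundamental theorem of calculus after approximating $v$ by smooth functions), both pieces are controlled by the same quantity, giving
\[
\B|\partial\B(\f{(\rho v)^{\varepsilon}}{\rho^{\varepsilon}}\B)(x)\B|\le C\,\varepsilon^{-d}\int_{B(x,\varepsilon)}\int_{0}^{1}\big|\nabla v\big(x+s(y-x)\big)\big|\,ds\,dy,
\]
with $C$ depending only on $d,\underline{\rho},\overline{\rho}$.

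Finally, the substitutions $z=x-y$ and then $w=sz$ inside the $s$-integral turn the right-hand side into $C\,(|\nabla v|*K_{\varepsilon})(x)$, where $K_{\varepsilon}(w)=\int_{0}^{1}(s\varepsilon)^{-d}\mathbf{1}_{B(0,s\varepsilon)}(w)\,ds\ge0$ — this is precisely the kernel $J_{\varepsilon}$ appearing in \eqref{b16}, and it satisfies $\|K_{\varepsilon}\|_{L^{1}(\mathbb{R}^{d})}=|B(0,1)|$. Young's convolution inequality then yields
\[
\B\|\partial\B(\f{(\rho v)^{\varepsilon}}{\rho^{\varepsilon}}\B)\B\|_{L^{p}(\mathbb{T}^{d})}\le C\|K_{\varepsilon}\|_{L^{1}}\|\nabla v\|_{L^{p}(\mathbb{T}^{d})}\le C\|\nabla v\|_{L^{p}(\mathbb{T}^{d})},
\]
which is \eqref{b1}. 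On $\mathbb{T}^{d}$ the same computation applies verbatim for $\varepsilon$ small, since all kernels involved are supported in balls of radius at most $\varepsilon$; crucially the constant $C$ is independent of $\varepsilon$, which is what makes the inequality usable for passing to the limit in the proof of Theorem \ref{the1.2}.
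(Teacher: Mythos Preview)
Your argument is correct and is essentially the same as the paper's proof: both avoid any appearance of $\nabla(\rho v)$ by inserting $v(x)$ so that only differences $v(\cdot)-v(x)$ survive, bound these via the fundamental theorem of calculus, and conclude through convolution with the kernel $J_\varepsilon$ (your $K_\varepsilon$). Your symmetrized double-integral identity for $D\,\partial N-N\,\partial D$ together with the split $|v(y)-v(z)|\le|v(y)-v(x)|+|v(x)-v(z)|$ reproduces exactly the paper's decomposition $I_1+I_2$ (obtained there by adding and subtracting $v(x)\partial\rho^{\varepsilon}/\rho^{\varepsilon}$); the only cosmetic difference is that you apply Young directly to $|\nabla v|\ast K_\varepsilon$, whereas the paper first uses H\"older to reach $(|\nabla v|^{p}\ast J_\varepsilon)^{1/p}$ before taking the $L^{p}$ norm.
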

\begin{proof}
	By direct computation, one has
	\begin{equation}\label{b3}
	\partial\left(\frac{(\rho v)^\varepsilon}{\rho ^\varepsilon}\right)=\frac{\partial (\rho v)^\varepsilon-v\partial \rho ^\varepsilon}{\rho^\varepsilon}-\frac{\left((\rho v)^\varepsilon-\rho ^\varepsilon v\right)\partial \rho ^\varepsilon}{(\rho ^\varepsilon)^2}:=I_1+I_2.
	\end{equation}	
	Let $B(x,\varepsilon)=\{y\in \mathbb{T}^d; |x-y|<\varepsilon\}$, then Using the H\"older's inequality, we have
	\begin{equation}\label{b4}
	\begin{aligned}
	|I_1|&\leq C|\int \rho(y)\left(v(y)-v(x)\right)\nabla_x\eta_\varepsilon(x-y)dy|\\
	&\leq C\|\rho \|_{L^\infty}|\int_{\mathbb{R}^d}|v(y)-v(x)|\frac{1}{\varepsilon^d}\nabla \eta (\frac{x-y}{^\varepsilon})\frac{1}{^\varepsilon}dy|\\
	&\leq C\left(\frac{1}{\varepsilon^d}\int _{B(x,\varepsilon)}\frac{|v(y)-v(x)|^p}{\varepsilon^p}dy\right)^{\frac{1}{p}}.\\
	\end{aligned}
	\end{equation}	
Then using the mean value theorem, one can obtain
	\begin{equation}\label{b5}
	\begin{aligned}
	\frac{1}{\varepsilon^d}\int_{B(x,\varepsilon)}\frac{|v(y)-v(x)|^p}{\varepsilon^p}dy&\leq C\frac{1}{\varepsilon^d}\int_{B(x,\varepsilon)}\int_0^1 |\nabla v(x+(y-x)s)|^p\frac{|y-x|^p}{\varepsilon^p}dsdy\\
	&\leq C \int_0^1\int_{B(0,1)}|\nabla v(x+s\varepsilon \omega)|^pd\omega ds\\
	&\leq C\int _{\mathbb{R}^d}|\nabla v(x-z)|^p\int_0^1\frac{\bf{1}_{(B£¨(0,\varepsilon s)}(z)}{(\varepsilon s)^d} ds dz\\
	&=(|\nabla v|^p *J_\varepsilon) (x),
	\end{aligned}
	\end{equation}	
	where $J_\varepsilon(z)=\int_0^1 \frac{\bf{1}_{(B£¨(0,\varepsilon s)}(z)}{(\varepsilon s)^d} ds\geq 0$ and it's easy to check that $\int_{\mathbb{R}^d} J_\varepsilon dz={\text \ measure\  of\  }(B(0,1))$.
	Next, to estimate $I_2$, due to the H\"older's inequality, one deduces
	\begin{equation}\label{b6}
	\begin{aligned}
	|I_2|&=|\int \rho(y)\left(v(y)-v(x)\right)\eta_\varepsilon(x-y)dy \frac{\int \rho(y)\nabla_x \eta_\varepsilon(x-y)dy}{\left(\int \rho(y)\eta_\varepsilon(x-y)dy\right)^2}|\\
	&\leq C\|\rho \|_{L^\infty} ^2\int_{B(x,\varepsilon)} |v(y)-v(x)|\frac{1}{\varepsilon^d}dy\int_{B(x,\varepsilon)} \frac{1}{\varepsilon^d}|\nabla \eta(\frac{x-y}{\varepsilon})|\frac{1}{\varepsilon} dy\\
	&\leq C \left(\frac{1}{\varepsilon^d}\int _{B(x,\varepsilon)}\frac{|v(y)-v(x)|^p}{\varepsilon^p}dy\right)^{\frac{1}{p}}.
	\end{aligned}
	\end{equation}
	Therefore, by the same arguments as in \eqref{b5}, in combination with \eqref{b3}-\eqref{b6}, we have
	\begin{equation}\label{b7}
	|I_1|+|I_2|\leq C\left(\nabla v|^p *J_\varepsilon \right)^{\frac{1}{p}}.
	\end{equation}	
	Then from the Minkowski's inequality, we arrive at
	\begin{equation}
	\begin{aligned}
	\B\|\partial \left(\frac{(\rho v)^{\varepsilon}}{\rho ^\varepsilon}\right)\B\|_{L^p(\mathbb{T}^d)}&\leq C\|\left(|\nabla v|^p*J_\varepsilon\right)^{\frac{1}{p}}\|_{L^p}\\
	&\leq C\|\nabla v\|_{L^p}\|J_\varepsilon\|_{L^1}^{\frac{1}{p}}\leq  C\|\nabla v\|_{L^p}.
	\end{aligned}
	\end{equation}
	Then we have completed the proof of lemma \eqref{lem2.3}.
	\end{proof}
\section{Proof of Theorem \ref{the1.2} and Corollary \ref{the1.1}}
In this section, we first present the proof of Theorem \ref{the1.2}. Then,
making use of interpolation and the natural energy, we prove  Corollary \ref{the1.1} by the results of Theorem \ref{the1.2}.
 \begin{proof}[Proof of Theorem \ref{the1.2}]
 	Let $\phi(t)$ be a smooth function compactly supported in $(0,+\infty)$. 	Multiplying $\eqref{GNS}_2$ by $\left(\phi(t)\frac{(\rho v)^\varepsilon}{\rho^\varepsilon}\right)^\varepsilon$, then integrating it   over $(0,T)\times \mathbb{T}^d$, we have
\begin{equation}\label{c1} \begin{aligned}
 \int_0^T\int \phi(t)\f{(\rho v)^{\varepsilon}}{\rho^{\varepsilon}}\B[\partial_{t}(\rho v)^{\varepsilon}+ \Div(\rho v\otimes v)^{\varepsilon}+\nabla p(\rho)^\varepsilon-\Div(\nu(\rho)\mathbb{D}v)^\varepsilon-\nabla(\mu(\rho)\Div v)^\varepsilon\B]=0.
 \end{aligned}\end{equation}
We will rewrite every term of the last  equality  to pass the limit of $\varepsilon$. For the first term in \eqref{c1}, a  straightforward  calculation and $\eqref{GNS}_{1}$ yields that
\begin{equation}\label{c2}
\begin{aligned} \int_0^T\int \phi(t)\f{(\rho v)^{\varepsilon}}{\rho^{\varepsilon}} \partial_{t}\B(\rho v\B)^{\varepsilon}&= \int_0^T\int\phi(t)\left[\f12\partial_{t}(\f{|(\rho v)^{\varepsilon}|^{2}}{\rho^{\varepsilon}})+\f12\partial_{t}\rho^{\varepsilon}
\f{|(\rho v)^{\varepsilon}|^{2}}{(\rho^{\varepsilon})^{2}}\right]\\
&= \int_0^T\int\phi(t)\left[\f12\partial_{t}\B(\f{|(\rho v)^{\varepsilon}|^{2}}{\rho^{\varepsilon}}\B)- \f12\Div(\rho v)^{\varepsilon}
\f{|(\rho v)^{\varepsilon}|^{2}}{(\rho^{\varepsilon})^{2}}\right].\\
 \end{aligned}\end{equation}
For the second term in \eqref{c1}, by integration by parts, it gives that
\begin{equation}\label{c3}\begin{aligned}
  &\int_0^T\int\phi(t)\f{(\rho v)^{\varepsilon}}{\rho^{\varepsilon}}  \Div(\rho v\otimes v)^{\varepsilon}\\
  =&-\int_0^T\int\phi(t)\nabla\B(\f{(\rho v)^{\varepsilon}}{\rho^{\varepsilon}} \B) [(\rho v\otimes v)^{\varepsilon}-(\rho v)^{\varepsilon}\otimes v^{\varepsilon}]-\int_0^T\int\phi(t)\nabla\B(\f{(\rho v)^{\varepsilon}}{\rho^{\varepsilon}} \B)(\rho v)^{\varepsilon}\otimes v^{\varepsilon}.
 \end{aligned}\end{equation}
 For the second term on the right hand side of above equality \eqref{c3}, it follows from the integration by parts once again that
 \begin{equation}\label{c4}\begin{aligned}
&- \int_0^T\int\phi(t)\nabla\B(\f{(\rho v)^{\varepsilon}}{\rho^{\varepsilon}} \B)(\rho v)^{\varepsilon}\otimes v^{\varepsilon}\\=& \int_0^T\int\phi(t)\left( \Div v^{\varepsilon} \f{|(\rho v)^\varepsilon|^{2}}{\rho^{\varepsilon}}+\f12 \f{ v^{\varepsilon}}{\rho^{\varepsilon}}\nabla|(\rho v)^{\varepsilon}  |^{2}\right)\\
=& \int_0^T\int\phi(t)\left( \f12 \Div v^{\varepsilon} \f{|(\rho v)^\varepsilon|^{2}}{\rho^{\varepsilon}}-\f12 v^{\varepsilon}\nabla({\f{1}{\rho^{\varepsilon}}} )|(\rho v)^{\varepsilon}  |^{2}\right)\\
=& \f12\int_0^T\int\phi(t) \Div( \rho^{\varepsilon}v^{\varepsilon} ) \f{|(\rho v)^\varepsilon|^{2}}{(\rho^{\varepsilon})^{2}} \\
 =& \f12\int_0^T\int\phi(t) \Div\B[ \rho^{\varepsilon}v^{\varepsilon}-(\rho v)^{\varepsilon} \B] \f{|(\rho v)^\varepsilon|^{2}}{(\rho^{\varepsilon})^{2}} +
 \f12\int_0^T\int\phi(t) \Div (\rho v)^{\varepsilon}   \f{|( \rho v)^\varepsilon|^{2}}{(\rho^{\varepsilon})^{2}}\\
 =&- \int_0^T\int\phi(t)\B[ \rho^{\varepsilon}v^{\varepsilon}-(\rho v)^{\varepsilon} \B] \f{(\rho v)^\varepsilon}
 {\rho^{\varepsilon}}\nabla\f{(\rho v)^\varepsilon}{\rho^{\varepsilon}} +
 \f12\int_0^T\int\phi(t) \Div (\rho v)^{\varepsilon}   \f{|(\rho v)^\varepsilon|^{2}}{(\rho^{\varepsilon})^{2}}.
 \end{aligned}\end{equation}
Then inserting  \eqref{c4}  into \eqref{c3}, we  have
 \begin{equation}\label{c5}\begin{aligned}
  &\int_0^T\int\phi(t)\f{(\rho v)^{\varepsilon}}{\rho^{\varepsilon}}  \Div(\rho v\otimes v)^{\varepsilon}\\=&-\int_0^T\int\phi(t)\nabla\B(\f{(\rho v)^{\varepsilon}}{\rho^{\varepsilon}} \B) [(\rho v\otimes v)^{\varepsilon}-(\rho v)^{\varepsilon}\otimes v^{\varepsilon}]\\&-\int_0^T\int\phi(t)\B[ \rho^{\varepsilon}v^{\varepsilon}-(\rho v)^{\varepsilon} \B] \f{(\rho v)^\varepsilon}
  {\rho^{\varepsilon}}\nabla \f{(\rho v)^\varepsilon}{\rho^{\varepsilon}}+
 \f12\int_0^T\int\phi(t) \Div (\rho v)^{\varepsilon}   \f{|(\rho v)^\varepsilon|^{2}}{(\rho^{\varepsilon})^{2}}.
   \end{aligned}\end{equation}
   For the pressure term in \eqref{c1}, together with the integration by parts, one has
    \begin{equation}\label{c6}
    \begin{aligned}
 &\int_0^T\int\phi(t)\f{(\rho v)^{\varepsilon}}{\rho^{\varepsilon}} \nabla(p(\rho))^{\varepsilon}\\
 = &\int_0^T\int\phi(t)\frac{(\rho v)^\varepsilon}{\rho^\varepsilon}\nabla \left[(p(\rho))^\varepsilon-p(\rho^\varepsilon)\right]+\int_0^T\int\phi(t)  \f{(\rho v)^{\varepsilon}}{\rho^{\varepsilon}} \nabla  p(\rho^{\varepsilon})\\
 =&-\int_0^T\int\phi(t)\Div\B[\f{(\rho v)^{\varepsilon}}{\rho^{\varepsilon}} \B] [(p(\rho))^{\varepsilon}- p(\rho^{\varepsilon}) ]+\int_0^T\int\phi(t)  \f{(\rho v)^{\varepsilon}}{\rho^{\varepsilon}} \nabla  p(\rho^{\varepsilon}).
\end{aligned} \end{equation}
Using the mass equation $\eqref{GNS}_1$, the second term on the right hand-side of \eqref{c6} can be rewritten as
\begin{equation}\label{c7}
\begin{aligned}
\int_0^T\int\phi(t) \frac{(\rho v)^\varepsilon}{\rho ^\varepsilon}\nabla p(\rho^\varepsilon)&=\int_0^T\int\phi(t) (\rho v)^\varepsilon\nabla \int_{1}^{\rho ^\varepsilon}
\frac{p^{'}(z)}{z}dzdxdt\\
&=\int_0^T\int\phi(t) \partial_t\rho^\varepsilon\left[\frac{p(\rho^\varepsilon)}{\rho ^\varepsilon}+\int_{1}^{\rho^\varepsilon}\frac{p(z)}{z^2}dz\right]dxdt\\
&=\int_0^T\int\phi(t) \partial_t P(\rho^\varepsilon),
\end{aligned}
\end{equation}
where
$P(\rho^\varepsilon)=\rho^\varepsilon\int_{1}^{\rho^\varepsilon}\frac{p(z)}{z^2}dz.$\\
Finally, for the viscous terms in \eqref{c1}, using the integration by parts, we have
 \begin{equation}\label{c8}\begin{aligned} &-\int_0^T\int\phi(t)\f{(\rho v)^{\varepsilon}}{\rho^{\varepsilon}} \Div(\nu(\rho)\mathbb{D}v)^{\varepsilon}\\
 	= 	&\int_0^T\int\phi(t)\left(-\Div (\nu(\rho)\mathbb{D}v) ^\varepsilon v^\varepsilon -\Div(\nu(\rho)\mathbb{D}v)^\varepsilon
 \f{(\rho v)^{\varepsilon}-\rho^{\varepsilon} v^{\varepsilon}}{\rho^{\varepsilon}}\right),\\
\end{aligned}\end{equation}
 \text{and}
 \begin{equation}\label{c8-1}\begin{aligned}
&-\int_0^T\int\phi(t)\f{(\rho v)^{\varepsilon}}{\rho^{\varepsilon}}\nabla(\mu(\rho)\Div v)^\varepsilon\\
=& \int_0^T\int\phi(t)\left( -\nabla  (\mu(\rho)\Div v)^\varepsilon  v^{\varepsilon}-\nabla(\mu(\rho)\Div v)^{\varepsilon}
 \f{(\rho v)^{\varepsilon}-\rho^{\varepsilon} v^{\varepsilon}}{\rho^{\varepsilon}}\right).
\end{aligned}  \end{equation}
Then substituting \eqref{c2}, \eqref{c5}-\eqref{c8-1} into \eqref{c1}, we see that
 \begin{equation}\label{c9}
 \begin{aligned}
&\int_0^T\int\phi(t)\partial_{t}\left(\f12\f{|(\rho v)^{\varepsilon}|^{2}}{\rho^{\varepsilon}}+P(\rho^\varepsilon)\right) -\int_0^T\int\phi(t)\left( \Div\left(\nu(\rho)\mathbb{D}v\right)^{\varepsilon}  v^{\varepsilon}+ \nabla(\mu(\rho)\Div v)^{\varepsilon}  v^{\varepsilon}\right)\\
=
 &\int_0^T\int\phi(t) \Div(\nu(\rho)\mathbb{D}v)^{\varepsilon}
 \f{(\rho v)^{\varepsilon}-\rho^{\varepsilon} v^{\varepsilon}}{\rho^{\varepsilon}}+\int_0^T\int\phi(t)\nabla(\mu(\rho)\Div v)^{\varepsilon}
 \f{(\rho v)^{\varepsilon}-\rho^{\varepsilon} v^{\varepsilon}}{\rho^{\varepsilon}}
\\&\ \ \ +\int_0^T\int\phi(t)\Div\B[\f{(\rho v)^{\varepsilon}}{\rho^{\varepsilon}} \B] [(p(\rho))^{\varepsilon}- p(\rho^{\varepsilon}) ]\\
+&\int_0^T\int\phi(t)\nabla\B(\f{(\rho v)^{\varepsilon}}{\rho^{\varepsilon}} \B) [(\rho v\otimes v)^{\varepsilon}-(\rho v)^{\varepsilon}\otimes v^{\varepsilon}] +\int_0^T\int\phi(t)\B[ \rho^{\varepsilon}v^{\varepsilon}-(\rho v)^{\varepsilon} \B] \f{(\rho v)^\varepsilon}
{\rho^{\varepsilon}}\nabla\f{(\rho v)^\varepsilon}{\rho^{\varepsilon}}.\\
  \end{aligned}\end{equation}
  Next, we need to prove that the terms on the right hand-side of \eqref{c9} tend to zero as $\varepsilon\rightarrow 0$.

Firstly, it follows from Lemma \ref{lem2.1} and Lemma  \ref{lem2.2} that
\begin{equation}\label{c10}\begin{aligned}
&\|\Div(\nu(\rho)\mathbb{D}v)^\varepsilon\|_{L^{2}(L^{2})}\leq C\varepsilon^{-1}\| \nu(\rho)\mathbb{D}v \|_{L^{2}(L^{2})}\leq C\varepsilon^{-1}\|\nabla v\|_{L^2(L^2)},\\
&\limsup_{\varepsilon\rightarrow0}\varepsilon\|\Div(\nu(\rho)\mathbb{D}v)^\varepsilon\|_{L^{2}(L^{2})}=0,\\
&	\|(\rho v)^\varepsilon-\rho^\varepsilon v^\varepsilon\|_{L^2(L^2)}\leq C\varepsilon \|\rho\|_{L^{\infty}(L^\infty)}\|v\|_{L^{2}(W^{1,2})}.
  \end{aligned}\end{equation}
Moreover, due to the H\"older's inequality, we can obtain that
\begin{equation}\label{c11}\begin{aligned}
&\B|\int_0^T\int\phi(t)\Div(\nu(\rho)\mathbb{D}v)^\varepsilon
 \f{(\rho v)^{\varepsilon}-\rho^{\varepsilon} v^{\varepsilon}}{\rho^{\varepsilon}}\B|\\
\leq& C\|\Div(\nu(\rho)\mathbb{D}v)^\varepsilon\|_{L^{2}(L^{2})}\|
 \f{(\rho v)^{\varepsilon}-\rho^{\varepsilon} v^{\varepsilon}}{\rho^{\varepsilon}}\|_{L^{2}(L^{2})}\\
\leq& C\varepsilon\|\Div(\nu(\rho)\mathbb{D}v)^\varepsilon\|_{L^{2}(L^{2})}\|\rho \|_{L^{\infty}L^{\infty}} \|v\|_{L^{2}(W^{1,2})}. \end{aligned}\end{equation}
 As a consequence, in combination with \eqref{c10} and \eqref{c11}, we have
$$ \limsup_{\varepsilon\rightarrow0}\B|\int_0^T\int\phi(t)\Div(\nu(\rho)\mathbb{D}v)^\varepsilon
 \f{(\rho v)^{\varepsilon}-\rho^{\varepsilon} v^{\varepsilon}}{\rho^{\varepsilon}}\B|=0.
$$
Likewise, there also holds
\begin{equation}\label{c12}
\begin{aligned}
\limsup_{\varepsilon\rightarrow0}\B|\int_0^T\int\phi(t)\nabla(\mu(\rho)\Div v)^{\varepsilon}
 \f{(\rho v)^{\varepsilon}-\rho^{\varepsilon} v^{\varepsilon}}{\rho^{\varepsilon}}\B|=0.
\end{aligned}\end{equation}
Next, by means of  the triangle inequality, the H\"older inequality and Lemma \ref{lem2.3}, we obtain
\begin{equation}\label{c13}\begin{aligned}  &\int_0^T\int\phi(t)\Div\B[\f{(\rho v)^{\varepsilon}}{\rho^{\varepsilon}} \B] [(p(\rho))^{\varepsilon}- p(\rho^{\varepsilon}) ]\\
  \leq& \int_0^T\int\phi(t)\B|\Div\B[\f{(\rho v)^{\varepsilon}}{\rho^{\varepsilon}} \B] \B| |(p(\rho))^{\varepsilon}- p(\rho ) |+ \int_0^T\int\phi(t)\B|\Div\B[\f{(\rho v)^{\varepsilon}}{\rho^{\varepsilon}} \B]\B| | p(\rho) - p(\rho^{\varepsilon}) |\\
  \leq& C\|\Div\B[\f{(\rho v)^{\varepsilon}}{\rho^{\varepsilon}} \B]\|_{L^{2}(L^{2})}\B(\|(p(\rho))^{\varepsilon}- p(\rho )\|_{L^{2}(L^{2})}+\| p(\rho) - p(\rho^{\varepsilon} )\|_{L^{2}(L^{2})}\B)
  \\
 \leq& C\|\nabla v\|_{L^{2}(L^{2})}  \B(\| (p(\rho))^{\varepsilon} - p(\rho )\|_{L^{2}(L^{2})}+\|p'\|_{L^\infty(L^\infty)}\|  \rho  -  \rho^{\varepsilon} \|_{L^{2}(L^{2})}\B),
  \end{aligned}\end{equation}
  which implies that
  $$
  \limsup_{\varepsilon\rightarrow0}\int_0^T\int\phi(t)\Div\left(\f{(\rho v)^{\varepsilon}}{\rho^{\varepsilon}} \right) \big((p(\rho))^{\varepsilon}- p(\rho^{\varepsilon}) \big)=0.$$
At this stage,  it is enough to show
\begin{equation}\label{c14}\begin{aligned}
\limsup_{\varepsilon\rightarrow0}\int_0^T\int&\phi(t)\nabla\B(\f{(\rho v)^{\varepsilon}}{\rho^{\varepsilon}} \B) [(\rho v\otimes v)^{\varepsilon}-(\rho v)^{\varepsilon}\otimes v^{\varepsilon}]\\
 &+\limsup_{\varepsilon\rightarrow0}\int_0^T\int\phi(t)\B[ \rho^{\varepsilon}v^{\varepsilon}-(\rho v)^{\varepsilon} \B] \f{(\rho v)^\varepsilon}
 {\rho^{\varepsilon}}\nabla\f{(\rho v)^\varepsilon}{\rho^{\varepsilon}} =0,
  \end{aligned}\end{equation}
under the hypothesis
\be\label{unified}
v\in L^{\frac{2p}{p-1}}(L^{\frac{2q}{q-1}})~~ \text{ and }~~\nabla v\in L^p(L^q).
\ee
To do this, applying Lemma \ref{lem2.2}, we obtain that
\begin{equation}\label{c17} \begin{aligned}
&\|(\rho v\otimes v)^{\varepsilon}-(\rho v)^{\varepsilon}\otimes v^{\varepsilon} \|_{L^{\f{2p}{p+1}}(L^{\f{2q}{q+1}})}\leq C\varepsilon\|v\|_{L^p(W^{1,q})} \|\rho v\|_{L^{\frac{2p}{p-1}}(L^{\frac{2q}{q-1}})},\\
&\B\|\nabla\B(\f{(\rho v)^{\varepsilon}}{\rho^{\varepsilon}} \B)\B\|_{L^{\f{2p}{p-1}}(L^{\f{2q}{q-1}})}\leq C\varepsilon^{-1}\|\rho v\|_{L^{\f{2p}{p-1}}(L^{\f{2q}{q-1}})},\\ &\limsup_{\varepsilon\rightarrow0}\varepsilon\B\|\nabla\B(\f{(\rho v)^{\varepsilon}}{\rho^{\varepsilon}} \B)\B\|_{L^{\f{2p}{p-1}}(L^{\f{2q}{q-1}})}=0.
\end{aligned}\end{equation}
Using the H\"older's inequality and Lemma \ref{lem2.1}, we find
  \begin{equation}\label{c18}\begin{aligned}
 &\B|\int_0^T\int\phi(t)\nabla\B(\f{(\rho v)^{\varepsilon}}{\rho^{\varepsilon}} \B) [(\rho v\otimes v)^{\varepsilon}-(\rho v)^{\varepsilon}\otimes v^{\varepsilon}]\B|\\
\leq & C\B\|\nabla\B(\f{(\rho v)^{\varepsilon}}{\rho^{\varepsilon}} \B)\B\|_{L^{\f{2p}{p-1}}(L^{\f{2q}{q-1}})}\|(\rho v\otimes v)^{\varepsilon}-(\rho v)^{\varepsilon}\otimes v^{\varepsilon} \|_{L^{\f{2p}{p+1}}(L^{\f{2q}{q+1}})}\\
\leq & C\varepsilon\B\|\nabla\B(\f{(\rho v)^{\varepsilon}}{\rho^{\varepsilon}} \B)\B\|_{L^{\f{2p}{p-1}}(L^{\f{2q}{q-1}})}\|v\|_{L^p(W^{1,q})} \|\rho v\|_{L^{\frac{2p}{p-1}}(L^{\frac{2q}{q-1}})}\\
\leq & C\varepsilon\B\|\nabla\B(\f{(\rho v)^{\varepsilon}}{\rho^{\varepsilon}} \B)\B\|_{L^{\f{2p}{p-1}}(L^{\f{2q}{q-1}})}\|v\|_{L^p(W^{1,q})} \| v\|_{L^{\frac{2p}{p-1}}(L^{\frac{2q}{q-1}})}\\
\leq & C\varepsilon\B\|\nabla\B(\f{(\rho v)^{\varepsilon}}{\rho^{\varepsilon}} \B)\B\|_{L^{\f{2p}{p-1}}(L^{\f{2q}{q-1}})},
 \end{aligned}\end{equation}
 which in turn gives
$$
\limsup_{\varepsilon\rightarrow0}\B|\int_0^T\int\phi(t)\nabla\B(\f{(\rho v)^{\varepsilon}}{\rho^{\varepsilon}} \B) [(\rho v\otimes v)^{\varepsilon}-(\rho v)^{\varepsilon}\otimes v^{\varepsilon}]\B|=0.$$
Now, we turn our attentions to the term $\int_0^T\int\phi(t)\B[ \rho^{\varepsilon}v^{\varepsilon}-(\rho v)^{\varepsilon} \B] \f{(\rho v)^\varepsilon}
{\rho^{\varepsilon}}\nabla\f{(\rho v)^\varepsilon}{\rho^{\varepsilon}}$. Since  $\rho v\in L^{\f{2p}{p-1}}(L^{\f{2q}{q-1}}) $, we derive from Lemma \ref{lem2.1}
that
\begin{equation}\label{1.3.15}
\limsup_{\varepsilon\rightarrow0}\varepsilon\B\|\nabla \left(\f{(\rho v)^\varepsilon}{\rho^{\varepsilon}}\right)\B\|_{L^{\f{2p}{p-1}}(L^{\f{2q}{q-1}})}
=0.
\end{equation}
In addition, we conclude from Lemma \ref{lem2.2} that
\begin{equation}\label{1.3.16}
\|\rho^{\varepsilon}v^{\varepsilon}-(\rho v)^{\varepsilon}\|_{L^{p}(L^{q})} \leq C\varepsilon\|v\|_{L^{p}(W^{1,q})} \|\rho\|_{L^{\infty}(L^{\infty})}.
\end{equation}
Then, together with the H\"older's inequality  and \eqref{1.3.16}, we have,
\begin{equation}\label{1.3.17} \begin{aligned}
&\B|\int_0^T\int\phi(t)\B[ \rho^{\varepsilon}v^{\varepsilon}-(\rho v)^{\varepsilon} \B] \f{(\rho v)^\varepsilon}
{\rho^{\varepsilon}}\nabla \left(\f{(\rho v)^\varepsilon}{\rho^{\varepsilon}}\right)\B| \\
\leq&C  \|\rho^{\varepsilon}v^{\varepsilon}-(\rho v)^{\varepsilon}\|_{L^{p}(L^{q})}\B\|\f{(\rho v)^\varepsilon}
{\rho^{\varepsilon}}\B\|_{L^{\frac{2p}{p-1}}(L^{\frac{2q}{q-1}})} \B\|\nabla\left(\f{(\rho v)^\varepsilon}{\rho^{\varepsilon}}\right)\B\|_{L^{\f{2p}{p-1}}(L^{\f{2q}{q-1}})} \\
\leq& C\|v\|_{L^{p}(W^{1,q})} \|\rho\|_{L^{\infty}(L^{\infty})} \|v\|_{L^{\frac{2p}{p-1}}(L^{\frac{2q}{q-1}})}\varepsilon \B\|\nabla\left(\f{(\rho v)^\varepsilon}{\rho^{\varepsilon}}\right)\B
\|_{L^{\f{2p}{p-1}}
(L^{\f{2q}{q-1}})}\\
\leq& C\varepsilon \B\|\nabla\left(\f{(\rho v)^\varepsilon}{\rho^{\varepsilon}}\right)\B
\|_{L^{\f{2p}{p-1}}
	(L^{\f{2q}{q-1}})},\\
\end{aligned}\end{equation}
which together with \eqref{1.3.15} yields that
$$
\limsup_{\varepsilon\rightarrow0}
\B|\int_0^T\int\phi(t)\B[ \rho^{\varepsilon}v^{\varepsilon}-(\rho v)^{\varepsilon} \B] \f{(\rho v)^\varepsilon}
{\rho^{\varepsilon}}\nabla \f{(\rho v)^\varepsilon}{\rho^{\varepsilon}}\B|=0.
$$
Collecting  all the above estimates, using the integration by parts with respect to $t$ and letting $\varepsilon\rightarrow 0$, for any $\phi(t)\in \mathcal{D} (0,T) $, we have
\begin{equation}\label{c27}
	\begin{aligned}
		&-\int_0^T\int\partial_{t}\phi(t)\left(\f12\rho v^2+P(\rho)\right) + \int_0^T\int\phi(t) \left(\nu(\rho)|\mathbb{D} v|^2+\mu(\rho)|\Div v|^2 \right)=0.\\
\end{aligned}\end{equation}

The next objective is to get the energy equality up to the initial time $t=0$. First, we claim that for any $t_0\geq 0$,
$$\lim\limits_{t\rightarrow t_0^+}\|\sqrt{\rho} v(t)\|_{L^2(\mathbb{T}^d)}=\|\sqrt{\rho}v(t_0)\|_{L^2(\mathbb{T}^d)}\ \ \text{and}\ \ \lim\limits_{t\rightarrow t_0^+}\|P(\rho)(t)\|_{L^1(\mathbb{T}^d)}=\|P(\rho)(t_0)\|_{L^1(\mathbb{T}^d)}.$$
In fact, by the
energy estimates \eqref{1.13} and  the weak continuity of $\rho$ and  $\rho v$  in \eqref{1.14} and \eqref{1.15},  we have
\begin{equation}\label{3.22}
	\begin{aligned}
	&	\rho v\in L^\infty (0,T; L^2(\mathbb{T}^d))\cap H^1(0,T; W^{-1,1}(\mathbb{T}^d))\hookrightarrow C([0,T]; L^2_{weak}(\mathbb{T}^d)),\\
	&	\text{and}\\
	&\sqrt{\rho}\in L^\infty(0,T;L^\infty(\mathbb{T}^d))\cap H^1(0,T; H^{-1}(\mathbb{T}^d))\hookrightarrow C([0,T]; L^l_{weak}(\mathbb{T}^d)),\ \text{for\ any}\ l\in (1,+\infty)\\
	\end{aligned}
\end{equation}
Due to the convexity of $\rho \mapsto P(\rho)$, we have
\begin{equation}\label{3.23}
	\begin{aligned}
	\int_{\mathbb{T}^d} P(\rho)(t_0)\leq \underline{\lim\limits_{t\rightarrow t_0^+}}	\int_{\mathbb{T}^d} P(\rho)(t),\ \ \text{for\ any}\ t_0\geq 0.
	\end{aligned}
\end{equation}
Meanwhile, using the natural energy \eqref{1.13},\eqref{1.16}, \eqref{3.22}and \eqref{3.23}, we have
\begin{equation}\label{d17}
	\begin{aligned}
		0&\leq \overline{\lim_{t\rightarrow 0^+}}\int |\sqrt{\rho} v-\sqrt{\rho_0}v_0|^2 dx\\
		&=2\overline{\lim_{t\rightarrow 0^+}}\left(\int \left(\f{1}{2}\rho |v|^2 +P(\rho) \right)dx-\int\left(\f{1}{2}\rho_0 |v_0|^2+P(\rho_0) \right)dx\right)\\
		&\ \ \ +2\overline{\lim_{t\rightarrow 0^+}}\left(\int\sqrt{\rho_0}v_0\left(\sqrt{\rho_0}v_0-\sqrt{\rho} v\right)dx+\int \left(P(\rho_0) -P(\rho)\right)dx\right)\\
		&\leq 2\overline{\lim_{t\rightarrow 0^+}}\int \sqrt{\rho_0}v_0\left(\sqrt{\rho_0}v_0-\sqrt{\rho}v\right)dx\\
		&=0,
	\end{aligned}
\end{equation}
where the last equality sign comes from
\begin{equation}\label{3.25}
	\begin{aligned}
		&2\overline{\lim_{t\rightarrow 0^+}}\int \sqrt{\rho_0}v_0\left(\sqrt{\rho_0}v_0-\sqrt{\rho}v\right)dx\\
		=&2\overline{\lim_{t\rightarrow 0^+}}\int \frac{\sqrt{\rho_0}v_0}{\sqrt{\rho}}\left(\sqrt{\rho}\sqrt{\rho_0}v_0-\rho v\right)dx\\
		\leq &2\overline{\lim_{t\rightarrow 0^+}}\int \frac{\sqrt{\rho_0}v_0}{\sqrt{\rho}}\left(\sqrt{\rho}\sqrt{\rho_0}v_0-\rho_0 v_0\right)dx+2\overline{\lim_{t\rightarrow 0^+}}\int \frac{\sqrt{\rho_0}v_0}{\sqrt{\rho}}\left({\rho_0}v_0-\rho v\right)dx\\
		\leq &2\overline{\lim_{t\rightarrow 0^+}}\int (\sqrt{\rho_0}v_0)^2\left(\sqrt{\rho}-\sqrt{\rho_0}\right)dx+2\overline{\lim_{t\rightarrow 0^+}}\int\sqrt{\rho_0} v_0\left({\rho_0}v_0-\rho v\right)dx\\
		=&0,
	\end{aligned}
\end{equation}
where we used \eqref{1.13}, \eqref{3.22} and $\sqrt{\rho_0}v_0\in L^{2+\delta}$ for any $\delta>0$.
Then we have
\begin{equation}\label{d18}
	\sqrt{\rho} v(t)\rightarrow \sqrt{\rho }v(0)\ \ strongly\ in\ L^2(\mathbb{T}^d)\ as\ t\rightarrow 0^+.
\end{equation}
Similarly, one has the right temporal continuity  of $\sqrt{\rho}v$ in $L^2(\mathbb{T}^d)$, hence, for any $t_0\geq 0$, we infer that
\begin{equation}\label{d19}
	\sqrt{\rho} v(t)\rightarrow \sqrt{\rho }v(t_0)\ \ strongly\ in\ L^2(\mathbb{T}^d)\ as\ t\rightarrow t_0^+.
\end{equation}
Next, it follows from \eqref{1.16} that
\begin{equation}\label{3.28}
	\overline{\lim_{t\rightarrow t_0^+}} \mathcal{E}(t)\leq \mathcal{E}(t_0).
\end{equation}
This and  \eqref{d19} imply
\begin{equation}\label{3.29}
\overline{	\lim\limits_{t\rightarrow t_0^+}}\|P(\rho)(t)\|_{L^1(\mathbb{T}^d)}\leq \|P(\rho)(t_0)\|_{L^1(\mathbb{T}^d)}.
\end{equation}
Notice from \eqref{1.13} and the mass equation $\eqref{GNS}_1$ that
\begin{equation}\label{3.30}
	P(\rho)\in L^\infty(0,T;L^\infty(\mathbb{T}^d))\cap H^1(0,T;H^{-1}(\mathbb{T}^d))\hookrightarrow C([0,T];L^2_{\text weak }(\mathbb{T}^d)).\end{equation}
Hence, \eqref{3.23}, \eqref{3.29} and \eqref{3.30} guarantee
\begin{equation}\label{3.31}\lim\limits_{t\rightarrow t_0^+}\|P(\rho)(t)\|_{L^1(\mathbb{T}^d)}=\|P(\rho)(t_0)\|_{L^1(\mathbb{T}^d)}.\end{equation}
Before we go any further, it should be noted that \eqref{c27} remains valid for function $\phi$ belonging to $W^{1,\infty}$ rather than $C^1$, then for any $t_0>0$, we redefine the test function $\phi$ as $\phi_\tau$ for some positive $\tau$ and $\alpha $ such that $\tau +\alpha <t_0$, that is
\begin{equation}
	\phi_\tau(t)=\left\{\begin{array}{lll}
		0, & 0\leq t\leq \tau,\\
		\f{t-\tau}{\alpha}, & \tau\leq t\leq \tau+\alpha,\\
		1, &\tau+\alpha \leq t\leq t_0,\\
		\f{t_0-t}{\alpha }, & t_0\leq t\leq t_0 +\alpha ,\\
		0, & t_0+\alpha \leq t.
	\end{array}\right.
\end{equation}
Then substituting this test function into \eqref{c27}, we arrive at
\begin{equation}
	\begin{aligned}
		-\int_\tau^{\tau+\alpha}\int& \f{1}{\alpha}\left(\f{1}{2}\rho v^2+P(\rho) \right)+\f{1}{\alpha}\int_{t_0}^{t_0+\alpha}\int 	\left(\f{1}{2}\rho v^2+P(\rho)\right)\\
		&+\int_{\tau}^{t_0+\alpha}\int \phi_\tau \left(\nu(\rho)|\mathbb{D} v|^2+\mu(\rho)|\Div v|^2 \right)=0.
	\end{aligned}	
\end{equation}
Taking $\alpha\rightarrow 0$ and using  the fact that $\int_0^t\int\left(\nu(\rho)|\mathbb{D} v|^2+\mu(\rho)|\Div v|^2 \right)$ is continuous with respect to $t$ and the  Lebesgue point Theorem, we deduce that
\begin{equation}
	\begin{aligned}
		-\int&\left(\f{1}{2}\rho v^2+P(\rho) \right)(\tau)dx+\int\left(\f{1}{2}\rho v^2+P(\rho) \right)(t_0)dx\\
		&+\int_\tau^{t_0}\int\left(\nu(\rho)|\mathbb{D} v|^2+\mu(\rho)|\Div v|^2 \right)=0.
	\end{aligned}
\end{equation}
Finally, letting $\tau\rightarrow 0$, using the continuity of $\int_0^t\int\left(\nu(\rho)|\mathbb{D} v|^2+\mu(\rho)|\Div v|^2 \right)$, \eqref{d19} and \eqref{3.31}, we can obtain
\begin{equation}\ba
	\int\left(\f{1}{2}\rho v^2+P(\rho) \right)(t_0)dx+\int_0^{t_0}\int&\left(\nu(\rho)|\mathbb{D} v|^2+\mu(\rho)|\Div v|^2 \right)dxds\\=&\int\left(\f{1}{2}\rho_0 v_0^2+P(\rho_0) \right)dx.
	\ea\end{equation}
Then we have completed the proof of Theorem \ref{the1.2}.

\end{proof}
We are in a position to prove Corollary \ref{the1.1}.
 \begin{proof}[Proof of Corollary \ref{the1.1}]
(1) The natural energy gives $v \in L^{2}(0,T;H^{1}(\mathbb{T}^3))$. Choosing $p=q=2$ in \eqref{u}, we 	immediately prove that the condition $v\in L^{4}(0,T;L^{4}(\mathbb{T}^3))$ yields energy equality.

It is worth remarking that
the rest proof  in \eqref{vei} can be reduced to this special case.
Next, we first deal with the case \eqref{vei} in Corollary
\ref{the1.1} with $q\geq 4$ and $\frac{2}{p}+\frac{2}{q}=1$.
The  Gagliardo-Nirenberg   inequality  guarantees that
\begin{equation}
\begin{aligned}
\|v\|_{L^{4}(0,T;L^{4}(\mathbb{T}^3) )}  \leq& C\|v\|_{L^{\infty}\left(0,T;L^{2}(\mathbb{T}^{3})\right)}^{\frac{(q-4)}{2 q-4}}\|v\|_{L^{p}(0,T;L^q(\mathbb{T}^3))}^{\frac{q}{2q-4}}\leq C.
\end{aligned}
\end{equation}
From the result just proved, we obtain  energy equality via \eqref{vei}  with $q\geq 4$. \\
Then we consider \eqref{vei} with $3<q<4$ and $\frac{1}{p}+\frac{3}{q}=1$.
Using the Gagliardo-Nirenberg   inequality again, we know that
\begin{equation}
\begin{aligned}
\|v\|_{L^{4}(0,T;L^{4}(\mathbb{T}^3))}
&\leq C\|v\|_{L^2(0,T;L^6(\mathbb{T}^3))}^{\frac{3(4-q)}{2(6-q)}}\|v\|_{L^p(0,T;L^q(\mathbb{T}^3))}^{\frac{q}{2(6-q)}}\\
	&\leq C\left(\|\nabla v\|_{L^2(0,T;L^2(\mathbb{T}^3))}+\|v\|_{L^\infty(0,T;L^2(\mathbb{T}^3))}\right)^{\frac{3(4-q)}{2(6-q)}}\|v\|_{L^p(0,T;L^q(\mathbb{T}^3))}^{\frac{q}{2(6-q)}}\leq C.
\end{aligned}
\end{equation}
 We finish the proof of \eqref{vei}.

(2) Now, we focus on the proof of \eqref{tivei}. Indeed, note that $  v\in L^{p}(0,T;W^{1,q}(\mathbb{T}^{3}))$, therefore, according to Theorem \ref{the1.2},
it suffices to derive  $v\in L^{\f{2p}{p-1}}(0,T;L^{\f{2q}{q-1}}(\mathbb{T}^3)) $   from  \eqref{tivei}.  For $q\geq \frac{9}{5}$, by the Gagliardo-Nirenberg   inequality, we get
 \begin{equation}\label{c19}
 \begin{aligned}
  \|v\|_{L^{\f{2q}{q-1}}(\mathbb{T}^3)}\leq C\|v\|_{L^{2}(\mathbb{T}^3)}^{\f{5q-9}{5q-6}}\|\nabla v\|_{L^{q}(\mathbb{T}^3)}^{\f{3}{5q-6}}.
  \end{aligned}\end{equation}
  Thanks to $\frac{1}{p}+\frac{6}{5 q}=1$, we further infer that
\begin{equation}\label{c20}
\begin{aligned}
  \|v\|_{L^{\f{2p}{p-1}}(0,T;L^{\f{2q}{q-1}}(\mathbb{T}^3))}\leq C \|v\|_{L^{\infty}(0,T;L^{2}(\mathbb{T}^3))}^{\f{5q-9}{5q-6}}\|\nabla v\|_{L^{p}(0,T;L^{q}(\mathbb{T}^3))}^{\frac{3}{5q-6}}\leq C.
  \end{aligned}\end{equation}
In light of Theorem \ref{the1.2}, we have proved \eqref{tivei} for $q\geq \frac{9}{5}$.

Finally,  for $\frac{3}{2}<q<\frac{9}{5}$, it follows the Gagliardo-Nirenberg inequality that
\begin{equation}\label{c21}
\begin{aligned}
\|v\|_{L^\frac{2q}{q-1}(\mathbb{T}^3)}\leq C\|v\|_{L^6(\mathbb{T}^3)}^{\frac{9-5q}{6-3q}}\|\nabla v\|_{L^q(\mathbb{T}^3)}^{\frac{2q-3}{6-3q}},
\end{aligned}
\end{equation}
Thanks to $\frac{1}{p}+\frac{3}{q}=2$, we further have
\begin{equation}
\begin{aligned}
\|v\|_{L^{\frac{2p}{p-1}}(0,T;L^{\frac{2q}{q-1}}(\mathbb{T}^3))}&\leq C\|v\|_{L^2(0,T;L^6(\mathbb{T}^3))}^{\frac{9-5q}{6-3q}}\|\nabla v\|_{L^p(0,T;L^q(\mathbb{T}^3))}^{\frac{2q-3}{6-3q}}\\
&\leq \left(\|\nabla v\|_{L^2(0,T;L^2(\mathbb{T}^3))}+\|v\|_{L^\infty(0,T;L^2(\mathbb{T}^3))}\right)^{\frac{9-5q}{6-3q}}\|\nabla v\|_{L^p(0,T;L^q(\mathbb{T}^3))}^{\frac{2q-3}{6-3q}},
\end{aligned}
\end{equation}
then we conclude the desired result from Theorem \ref{the1.2}. The proof of this Corollary is completed.
 \end{proof}

\section*{Acknowledgement}

 Wang was partially supported by  the National Natural
 Science Foundation of China under grant (No. 11971446, No. 12071113   and  No.  11601492). Ye was partially supported by the National Natural Science Foundation of China  under grant (No.11701145) and China Postdoctoral Science Foundation (No. 2020M672196).

\end{document}